\DeclareMathOperator{\h}{\rm hes}
\DeclareMathOperator{\diver}{\rm div}
\DeclareMathOperator{\Hes}{\rm Hes}
\DeclareMathOperator{\Ric}{\rm Ric}
\DeclareMathOperator{\spanned}{\rm span}
\newtheorem{theorem}{Theorem}[section]
\newtheorem{lemma}[theorem]{Lemma}
\newtheorem{corollary}[theorem]{Corollary}
\theoremstyle{definition}
\theoremstyle{remark}
\newtheorem{remark}[theorem]{Remark}
\begin{document}
\title{Isotropic quasi-Einstein manifolds}
\author[Brozos-V\'azquez, Garc\'ia-R\'io, Valle-Regueiro]{M. Brozos-V\'azquez, E. Garc\'ia-R\'io, 
X. Valle-Regueiro}
\address{MBV: Universidade da Coru\~na, Differential Geometry and its Applications Research Group, Escola Polit\'ecnica Superior, 15403 Ferrol,  Spain}
\email{miguel.brozos.vazquez@udc.gal}
\address{EGR-XVR: Faculty of Mathematics,
University of Santiago de Compostela,
15782 Santiago de Compostela, Spain}
\email{eduardo.garcia.rio@usc.es $\,\,$ xabier.valle@usc.es}
\thanks{Supported by project MTM2016-75897-P (Spain).}
\subjclass[2010]{53C21, 53B30, 53C24, 53C44}
\date{}
\keywords{Quasi-Einstein equation, warped product, $pp$-wave, harmonic Weyl tensor}

\maketitle

\begin{abstract} 
We investigate the local structure of four-dimensional Lo\-rent\-zian quasi-Einstein manifolds under conditions on the Weyl tensor. We show that if the Weyl tensor is harmonic and the potential function preserves this harmonicity then, in the isotropic case, the manifold is necessarily a $pp$-wave. Using the quasi-Einstein equation, further conclusions are obtained for $pp$-waves.
\end{abstract}

\section{Introduction}
Let $(M,g)$ be a Lorentzian manifold of dimension $4$. $(M,g)$ is said to be \emph{quasi-Einstein (qE)} if there exist a smooth function $f$ and a constant $\mu$ so that the Bakry-\'Emery-Ricci tensor $\rho_f^\mu:=\rho+\text{Hes}_f-\mu df\otimes df$ is a multiple of the metric $g$:
\begin{equation}\label{eq:general quasi-Einstein}
{\Hes}_f+\rho-\mu \, df\otimes df=\lambda\, g.
\end{equation}
Here $\rho$ and $\operatorname{Hes}_f$ denote the Ricci tensor and the Hessian of $f$, respectively. The Bakry-\'Emery-Ricci tensor $\rho_f^\mu$ naturally appears on manifolds with density and was recently used to extend splitting theorems (see \cite{Woolgar, WW}) or to obtain singularity theorems of cosmological type (see \cite{GW, KWW, RW, Woolgar}).For convenience, we denote this structure with the quadruple $(M,g,f,\mu)$. $\lambda$ is a function determined by the trace of \eqref{eq:general quasi-Einstein}: 
\begin{equation}\label{eq:trace-qE}
4\lambda=\Delta f+\tau-\mu g(\nabla f,\nabla f),
\end{equation}  
where $\tau$ denotes the scalar curvature.
Quasi-Einstein structures generalize well-known families of manifolds such as \emph{Einstein manifolds}, \emph{conformally Einstein manifolds}, \emph{gradient Ricci solitons} or \emph{$\kappa$-Einstein solitons} \cite{Sandra-JGeomAnal,cao-tran,CCDMM,Friedan,KNT}. 


If $\mu\neq 0$, equation~\eqref{eq:general quasi-Einstein} is linearized by the change of variable $h=e^{-\mu f}$ and transforms into $\Hes_h -\mu h\rho=-\mu h \lambda g$. Particularizing $\mu=1$ and $\lambda=-\frac14\left(\frac{\Delta h}{h}-\tau\right)$ one obtains the {\it static perfect fluid equation} \cite{Kobayashi1982}, where $h$ is an arbitrary function. Moreover, one recovers the characterizing equation of critical metrics for the quadratic functional given by the $L^2$-norm of the scalar curvature on metrics of fixed volume by additionally specifying $h=\tau$ (see \cite{Berger}).

The Einstein equation on a general warped product structure gives rise to the qE equation on the base for constant $\lambda$. Indeed, for a warped product $M=B\times_\varphi F$, if $M$ is Einstein then $(B,g_B)$ is qE for $\mu=\frac{1}{\dim F}>0$. Furthermore, the converse is also true for a suitable fiber $F$ and one can build examples of Einstein warped products from solutions to the qE equation \cite{Kim-Kim}.



The potential function of \eqref{eq:general quasi-Einstein} defines a conformal deformation
$\tilde g=e^{-f}g$ that  transforms the Ricci tensor as follows (see, for example, \cite{Kuhnel-Rademacher})
\[
\tilde \rho =\rho+\operatorname{Hes}_f+\frac{1}{2}\,df\otimes df+\frac{1}{2}\,(\Delta f-\|\nabla f\|^2)g.
\]
Hence, $\tilde g$ is Einstein if and only if $f$ is a solution to the qE equation for  $\mu=-\frac{1}2$. For this reason, this particular value of $\mu$ is distinguished and solutions to this particular case exhibit a different behavior than solutions for other values of $\mu$ (see \cite{LCFLorentzianQE,Catino} for examples of this fact). For other values of $\mu$, $\mu\neq -\frac12$, a solution to the qE equation gives rise to a conformal metric $\tilde g$ with associated Ricci tensor $\tilde \rho$ given by 
\[
\tilde \rho =\left(\mu+\frac12\right) \, df\otimes df+ \frac14\left(\tau+3\Delta f-\left(\mu+2\right)\|\nabla f\|^2\right)e^{f}\tilde g,
\]
Thus, if $\nabla f$ is timelike, the underlying geometric structure of the manifold is that of a perfect fluid spacetime \cite{Oneill}. These manifolds have been largely investigated; we refer to \cite{shepley, mantica2016} for old and recent examples where conditions on the Weyl tensor were considered.

%

It is well-known that the curvature tensor of a pseudo-Riemannian manifold is determined by its Ricci and Weyl tensors. The qE equation provides information on the Ricci tensor very directly, but not on the Weyl tensor. Hence it is common to find examples in the literature where qE manifolds, or any of its subfamilies, are considered under the hypothesis of local conformal flatness (see, for example, \cite{LCFLorentzianQE,catino2013}). We consider weaker conditions in this regard, namely that the Weyl tensor is harmonic and that the conformal metric $\tilde g=e^{-f} g$ also has harmonic Weyl tensor, i.e. $\operatorname{div} W=\operatorname{div} \tilde W=0$.
 
The divergence of the Weyl tensor is modified by a conformal change $\tilde g=e^{-f} g$ as $\operatorname{div} \tilde W=\operatorname{div} W-\frac12 W(\cdot,\cdot,\cdot,\nabla f)$, so the fact that $\tilde W$ is also harmonic translates into the condition $W(\cdot,\cdot,\cdot,\nabla f)=0$ (see \cite{Kuhnel-Rademacher}). 
Note that the qE equation also provides information on the level sets of the potential function by means of their second fundamental form ${\Hes}_f$.
The situation is very different depending on the character of $\nabla f$: if $\nabla f$ is spacelike or timelike ({\it non-isotropic} case) then the level sets of $f$ are non-degenerate hypersurfaces. We will see that the classification result resembles the Riemannian one in this case, implying that if $\nabla f$ is timelike then the manifold is a Robertson-Walker spacetime (see Subsection~\ref{subsection:non-isotropic}). Hence we are concentrating on the {\it isotropic} case, i.e.  $\nabla f$ is lightlike, and thus the level sets of $f$ are degenerate hypersurfaces. This exhibits the genuinely new and more interesting situations and is treated in  Subsection~\ref{subsection:isotropic}. We show that $4$-dimensional isotropic qE manifolds are $pp$-waves, but not necessarily plane waves (Theorem~\ref{th:isotropic}). We provide  remarks extending the four-dimensional results to higher dimensions. In Section 3 we analyze isotropic qE $pp$-waves more deeply and provide a convenient characterization, showing that either they are locally conformally flat or the necessary and sufficient condition for a $pp$-wave to be isotropic qE is to have harmonic Weyl tensor (Theorem~\ref{th:isotropic-pp-wave}).

\section{Quasi-Einstein Lorentzian manifolds}

The objective in this section is to study the rigidity of the underlying structure of a qE manifold.
In order to analyze the geometric objects associated to the qE equation, we first derive some formulas which involve different tensors giving information of the geometry of the manifold. We fix the curvature sign convention defining $R(X,Y)=\nabla_{[X,Y]}-[\nabla_X,\nabla_Y]$. Recall the expression for the Weyl tensor in dimension $4$: 
\begin{equation}\label{eq:weyl}\small
\begin{array}{l}
W(X,Y,Z,T)=R(X,Y,Z,T)
+\frac{\tau}{6}\{g(X,Z)g(Y,T)-g(X,T)g(Y,Z))\}\\
\noalign{\medskip}
\quad+\frac{1}{2}\{\rho(X,T)g(Y,Z)-\rho(X,Z)g(Y,T)+\rho(Y,Z)g(X,T)-\rho(Y,T)g(X,Z)\}.
\end{array}
\end{equation}

\begin{lemma}\label{lemma:formulae}
Let $(M,g,f,\mu)$ be qE with $\dim M=4$. Then
	\begin{align}
	&2\nabla \Delta f+2\Ric(\nabla f)+\nabla \tau-2\mu\left(\h_f(\nabla f)+\Delta f \nabla f\right)=2\nabla \lambda,\label{eq:5}\\
	\bigskip
	&\nabla \tau+2\mu(3\lambda-\tau) \nabla f+ 2 (\mu-1)\Ric(\nabla f)=6\nabla \lambda,\label{eq:6}\\
	\bigskip
	&R(X,Y,Z, \nabla f)
	=d\lambda(X)g(Y,Z)- d\lambda(Y)g(X,Z)+(\nabla_Y \rho)(X,Z)\label{eq:7} \\
	\smallskip	
	&\qquad-(\nabla_X \rho)(Y,Z)
	+\mu\left\{df(Y)\Hes_f(X,Z)-df(X)\Hes_f(Y,Z)\right\},\nonumber
	\end{align}
\vspace{-0.5cm}
\begin{equation}\label{eq:weylandcotton}
\begin{array}{l}
\!\!\!\!\!\!\! W(X,Y,Z,\nabla f)=2\operatorname{div}W(X,Y,Z)+ \frac{\tau(2\mu+1)}{6}\{ df(Y)g(X,Z)-df(X)g(Y,Z)\}\\
\noalign{\medskip}
\qquad\qquad\qquad\qquad\qquad\qquad +\frac{2\mu+1}{6}\{\rho(X,\nabla f)g(Y,Z)-\rho(Y,\nabla f)g(X,Z)\}\\
\noalign{\medskip}
\qquad\qquad\qquad\qquad\qquad\qquad 
+\frac{2\mu+1}{2}\{\rho(Y,Z)df(X)-\rho(X,Z)df(Y)\}.
\end{array}
\end{equation}	
	
\end{lemma}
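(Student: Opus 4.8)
The plan is to derive all four identities from the qE equation $\Hes_f+\rho-\mu\,df\otimes df=\lambda g$ by successive differentiation and tracing, combined with the two universal identities valid on any pseudo-Riemannian manifold: the contracted second Bianchi identity $\operatorname{div}\rho=\tfrac12 d\tau$ and the Ricci (commutation) identity for covariant derivatives. For \eqref{eq:5} I would take the divergence of the qE equation regarded as an identity between symmetric $2$-tensors. The three pieces evaluate to $\operatorname{div}(\Hes_f)=d(\Delta f)+\Ric(\nabla f)$ (the Bochner/commutation formula, where the curvature enters through the Ricci identity applied to the third derivatives of $f$), $\operatorname{div}\rho=\tfrac12 d\tau$, and $\operatorname{div}(df\otimes df)=\Delta f\,df+\Hes_f(\nabla f)$, while $\operatorname{div}(\lambda g)=d\lambda$. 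Multiplying by $2$ and collecting terms gives \eqref{eq:5} at once.

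To pass to \eqref{eq:6} I would eliminate the third-order terms. The gradient of the trace relation \eqref{eq:trace-qE} gives $4\nabla\lambda=\nabla\Delta f+\nabla\tau-2\mu\Hes_f(\nabla f)$, using $\nabla\|\nabla f\|^2=2\Hes_f(\nabla f)$; substituting this into \eqref{eq:5} removes $\nabla\Delta f$. The remaining Hessian and Laplacian terms are then rewritten through the qE equation itself: evaluating it on $\nabla f$ yields $\Hes_f(\nabla f)=\lambda\nabla f-\Ric(\nabla f)+\mu\|\nabla f\|^2\nabla f$, and \eqref{eq:trace-qE} gives $\Delta f=4\lambda-\tau+\mu\|\nabla f\|^2$. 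After substitution the quadratic terms $\mu^2\|\nabla f\|^2\nabla f$ cancel and one is left precisely with \eqref{eq:6}.

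Identity \eqref{eq:7} is the ``once more differentiated'' qE equation. I would apply $\nabla_X$ to $\Hes_f=\lambda g-\rho+\mu\,df\otimes df$ and antisymmetrize in $X$ and $Y$. On the left the symmetric third derivative of $f$ drops out and the Ricci identity produces exactly $R(X,Y,Z,\nabla f)$, the paper's convention $R(X,Y)=\nabla_{[X,Y]}-[\nabla_X,\nabla_Y]$ fixing the overall sign. On the right, antisymmetrizing $\nabla(\lambda g)$ gives the two $d\lambda$ terms and antisymmetrizing $-\nabla\rho$ gives $(\nabla_Y\rho)(X,Z)-(\nabla_X\rho)(Y,Z)$; the $\mu$-term reduces to $\mu\{df(Y)\Hes_f(X,Z)-df(X)\Hes_f(Y,Z)\}$ because the other contribution $\mu\,\Hes_f(X,Y)df(Z)$ is symmetric in $X,Y$ and is killed by antisymmetrization. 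The only care needed here is sign bookkeeping.

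The last formula \eqref{eq:weylandcotton} is where the real work lies. I would set $T=\nabla f$ in the Weyl decomposition \eqref{eq:weyl} and substitute \eqref{eq:7} for $R(X,Y,Z,\nabla f)$. The antisymmetrized Ricci derivative $(\nabla_Y\rho)(X,Z)-(\nabla_X\rho)(Y,Z)$ is then identified, via the second Bianchi identity in dimension $4$, with $2\operatorname{div}W(X,Y,Z)$ plus the Cotton-tensor correction carrying $d\tau$; simultaneously $\Hes_f$ is replaced using the qE equation, which turns the $\mu$-term into $\lambda$-, $\rho$- and $df$-contributions. Finally the leftover first-order data $d\lambda$ and $d\tau$ are eliminated by \eqref{eq:6}, which expresses $6\nabla\lambda-\nabla\tau$ through $\tau\,df$ and $\Ric(\nabla f)$. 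I expect the genuine obstacle to be the bookkeeping in this last step: one must verify that every $\lambda$-term cancels (the $\mu\lambda$ contribution coming from \eqref{eq:6} is exactly opposite to the $\mu\lambda$ produced by expanding $\Hes_f$), so that only $\tau$, $\rho(\cdot,\nabla f)$ and $\rho$ survive, and that the remaining coefficients assemble into the displayed $\tfrac{2\mu+1}{6}$ and $\tfrac{2\mu+1}{2}$. Matching the sign convention for $R$ against that used in the $\operatorname{div}W$--Cotton identity is the place where an error is most likely to creep in.
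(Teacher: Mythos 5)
Your proposal is correct and follows essentially the same route as the paper: divergence of the tensorial qE equation plus the Bochner and contracted Bianchi identities for \eqref{eq:5}, elimination of $\nabla\Delta f$ and $\Hes_f(\nabla f)$ via the trace relation and the qE equation for \eqref{eq:6}, differentiation and antisymmetrization of the qE equation with the Ricci identity for \eqref{eq:7}, and substitution of \eqref{eq:7} and the $\operatorname{div}W$ formula into the Weyl decomposition for \eqref{eq:weylandcotton}. You even make explicit a step the paper leaves implicit, namely that \eqref{eq:6} is what removes the residual $d\lambda$ and $d\tau$ terms and produces the $\frac{2\mu+1}{6}$ and $\frac{2\mu+1}{2}$ coefficients.
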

\begin{proof}
The divergence of Equation~\eqref{eq:trace-qE} combined with the Bochner formula, $\diver \Hes_f= d\Delta f + \rho(\nabla f,\cdot)$, and
	 the  Contracted Second Bianchi Identity,	$\diver \rho = \frac{1}{2} d\tau$, gives Equation \eqref{eq:5}.
	 Substituting $\Delta f$ and $\h_f(\nabla f)$ using \eqref{eq:general quasi-Einstein}, leads to \eqref{eq:6}.
	Finally, applying \eqref{eq:general quasi-Einstein} one gets:
 \begin{equation*}
\begin{array}{lll}
		(\nabla_X \rho)(Y,Z)& = & \left(\nabla_X \left(\lambda g-\Hes_f+\mu df\otimes df \right)\right) (Y,Z)\\
		\noalign{\medskip}
		& = & d\lambda(X)g(Y,Z)+ \mu \{\Hes_f(X,Y)df(Z)+df(Y)\Hes_f(X,Z)\}\\
		\noalign{\medskip}
		& &+g(\nabla_{\nabla_X Y}\nabla f,Z)-g(\nabla_{X }\nabla_Y \nabla f,Z). 	
	\end{array}
\end{equation*}	
	Therefore,
\begin{align*}
			(\nabla_X \rho)(Y,Z)-&(\nabla_Y \rho)(X,Z)= d\lambda(X)g(Y,Z) -d\lambda(Y)g(X,Z)\\
            &+\mu\{df(Y)\Hes_f(X,Z)- df(X)\Hes_f(Y,Z)\}+R(X,Y,\nabla f,Z),    
	\end{align*}
	from where \eqref{eq:7} follows. The expression for the divergence of the Weyl tensor is
\[
\operatorname{div} W (X,Y,Z)=-\frac12 (\nabla_X \rho)(Y,Z)-(\nabla_Y \rho)(X,Z)
+\frac{1}{12}(X(\tau) g(Y,Z)-Y(\tau) g(X,Z)),
\] 
so one can substitute the curvature term $R(X,Y,Z,\nabla f)$ in the definition of the Weyl tensor \eqref{eq:weyl} using expression \eqref{eq:7}, and then substitute $(\nabla_X \rho)(Y,Z)-(\nabla_Y \rho)(X,Z)$ using the divergence of the Weyl tensor to get Equation~\eqref{eq:weylandcotton}.
\end{proof}

If $(M,g)$ is qE for $\mu=-\frac12$, then $\tilde g=e^{-f}g$ is Einstein, so $\operatorname{div} \tilde W=0$. This implies that $\operatorname{div} W=0$ and $W(\cdot,\cdot,\cdot,\nabla f)=0$ are equivalent conditions. Our techniques do not apply in this particular case and the statements of  Theorem~\ref{Th:non-isotropic} and Theorem~\ref{th:isotropic} below are no longer true if $\mu=-\frac12$. In fact, a locally conformally flat Lorentzian $4$-dimensional manifold is qE for $\mu=-\frac{1}{2}$ and satisfies $\operatorname{div}W=0$ and $W(\cdot,\cdot, \cdot,\nabla f)=0$, but it is not necessarily a warped product or a pp-wave.

\subsection{Non-isotropic quasi-Einstein manifolds}\label{subsection:non-isotropic}
In this subsection we assume that the level sets of the potential function are nondegenerate hypersurfaces and explore the structure of qE manifolds assuming conditions on the Weyl tensor. Since $\|\nabla f\|\neq 0$ we set $E_1={\nabla f}/{\|\nabla f\|}$ and complete it to an orthonormal frame $\{E_1,\dots,E_4\}$ with $\varepsilon_i=g(E_i,E_i)$.

\begin{lemma}\label{lemma:W=0,nabla f eigenvector}
	Let $(M,g,f,\mu)$ be non-isotropic qE with $\mu\neq -\frac{1}{2}$, with $\operatorname{div} W=0$ and $W(\cdot,\nabla f,\cdot,\nabla f)=0$. Then, the Ricci operator diagonalizes in the basis $\{E_1,\dots,E_4\}$.
\end{lemma}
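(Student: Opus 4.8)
The plan is to show that every off-diagonal component $\rho(E_i,E_j)$, $i\neq j$, of the Ricci tensor vanishes in the frame $\{E_1,\dots,E_4\}$; since $\rho(X,Y)=g(\Ric X,Y)$ and the frame is orthonormal, this is exactly the assertion that each $E_i$ is an eigenvector of $\Ric$. The sole structural input is Equation~\eqref{eq:weylandcotton}, which under the hypothesis $\operatorname{div}W=0$ reduces to an algebraic identity expressing $W(X,Y,Z,\nabla f)$ purely in terms of $\tau$, $\rho$ and $df$, with every term carrying the factor $2\mu+1\neq0$. I would exploit this identity through two well-chosen specializations, treating the index pairs $\{1,j\}$ (those involving $\nabla f$) separately from the pairs $\{i,j\}$ orthogonal to $\nabla f$.

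First I would set $Z=\nabla f$ in \eqref{eq:weylandcotton}. The left-hand side $W(X,Y,\nabla f,\nabla f)$ vanishes by the antisymmetry of $W$ in its last two slots, the scalar-curvature term cancels, and after dividing by $2\mu+1$ the remaining terms collapse to $\rho(X,\nabla f)\,df(Y)=\rho(Y,\nabla f)\,df(X)$ for all $X,Y$. Choosing $Y=\nabla f$ and $X=E_j$ with $j\geq2$, so that $df(E_j)=0$ while $df(\nabla f)=g(\nabla f,\nabla f)\neq0$ by non-isotropy, forces $\rho(E_j,\nabla f)=0$, i.e. $\rho(E_1,E_j)=0$. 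This step uses only $\operatorname{div}W=0$ and $\mu\neq-\tfrac12$; it identifies $\nabla f$ as a Ricci eigenvector.

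Next I would set $Y=\nabla f$ in \eqref{eq:weylandcotton}; now the left-hand side is $W(X,\nabla f,Z,\nabla f)$, which vanishes by the standing hypothesis $W(\cdot,\nabla f,\cdot,\nabla f)=0$. Evaluating at $X=E_i$, $Z=E_j$ with $i,j\geq2$ kills every term containing a factor $df(E_i)$ or $df(E_j)$, leaving only the terms $\tfrac{\tau\alpha}{6}g(E_i,E_j)$, $-\tfrac16\rho(\nabla f,\nabla f)g(E_i,E_j)$ and $-\tfrac{\alpha}{2}\rho(E_i,E_j)$, where $\alpha=g(\nabla f,\nabla f)$. For $i\neq j$ the first two are proportional to $g(E_i,E_j)=0$, so $\tfrac{\alpha}{2}\rho(E_i,E_j)=0$; since $\alpha\neq0$ and $2\mu+1\neq0$ this gives $\rho(E_i,E_j)=0$. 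Together with the previous step, all off-diagonal entries vanish and $\Ric$ diagonalizes in $\{E_1,\dots,E_4\}$.

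The genuine obstacle is not the computation but the bookkeeping that decouples the problem: one must see that the Weyl antisymmetry (choice $Z=\nabla f$) and the curvature hypothesis (choice $Y=\nabla f$) feed into the two index-families independently, that the potentially obstructing $\tau$- and $\rho(\nabla f,\nabla f)$-terms are always attached to $g(E_i,E_j)$ and hence harmless off the diagonal, and that both the factor $2\mu+1$ and the length $\alpha=g(\nabla f,\nabla f)$ are nonzero---precisely the roles played by $\mu\neq-\tfrac12$ and non-isotropy. Should either specialization fail to separate cleanly, the fallback is to substitute the quasi-Einstein equation \eqref{eq:general quasi-Einstein} directly, writing $\rho(E_1,E_j)=-\Hes_f(E_1,E_j)$ and using $\Hes_f(\nabla f,E_j)=\tfrac12 E_j(g(\nabla f,\nabla f))$, which reduces the eigenvector claim to the constancy of $g(\nabla f,\nabla f)$ along the level sets of $f$.
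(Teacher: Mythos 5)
Your proposal is correct and follows essentially the same route as the paper: both specialize the identity \eqref{eq:weylandcotton} (with $\operatorname{div}W=0$ and $2\mu+1\neq0$) first at $Y=Z=\nabla f$, $X=E_i$ to get $\rho(E_i,\nabla f)=0$, and then at $Y=\nabla f$, $X=E_i$, $Z=E_j$ with $i,j\geq2$, $i\neq j$ to kill the remaining off-diagonal terms, using non-isotropy to divide by $\|\nabla f\|^2$. The only cosmetic difference is that you justify the first specialization by the antisymmetry $W(X,Y,\nabla f,\nabla f)=0$ rather than by the hypothesis $W(\cdot,\nabla f,\cdot,\nabla f)=0$; both are valid.
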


\begin{proof}
	Since $W(X,\nabla f,Z,\nabla f)=0$ for all vector fields $X$, $Z$, since $\operatorname{div} W=0$ and since $\mu\neq -\frac{1}{2}$, we obtain from \eqref{eq:weylandcotton} that
	\begin{equation}\label{eq:lemma W=0 mu neq}
	\begin{array}{rcl}
	0&=&\tau\{df(X)df(Z)-\|\nabla f\|^2 g(X,Z)\}\\
	\noalign{\medskip}
	& &+3\{ \rho(X,Z)\|\nabla f\|^2-\rho(\nabla f,Z)df(X)\}\\
	\noalign{\medskip}
	&&+  g(X,Z)\rho(\nabla f,\nabla f)
	-df(Z)\rho(X,\nabla f).
	\end{array}
	\end{equation}
	We set $X=E_i$, for $i\neq 1$, and $Z=\nabla f$ to see that $2\|\nabla f\|^2 \rho(E_i,\nabla f)=0$, so $\rho(E_i,E_1)=0$. By setting $X=E_i$ and $Z=E_j$ with $i\neq j$ and $i,j\geq 2$, we see that $\rho(E_i,E_j)=0$. 
\end{proof}

The following result shows that a qE manifold whose Weyl tensor is harmonic for the metrics $g$ and $e^{-f}g$ is a Robertson-Walker spacetime if $\nabla f$ is timelike ($\mu\neq -\frac{1}{2}$).

\begin{theorem}\label{Th:non-isotropic}
	Let $(M,g,f,\mu)$ be a non-isotropic qE Lorentzian structure of dimension $4$ with $\mu\neq -\frac{1}{2}$. If $(M,g)$ has harmonic Weyl tensor and $W(\cdot,\cdot, \cdot,\nabla f)=0$, then $(M,g)$ is locally conformally flat and locally isometric to a warped product of the form $I\times_\varphi N$, where $N$ has constant sectional curvature.
\end{theorem}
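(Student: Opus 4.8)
The plan is to combine the algebraic consequences of the two Weyl conditions with the diagonalization of the Ricci operator already furnished by Lemma~\ref{lemma:W=0,nabla f eigenvector}. First I would observe that local conformal flatness is almost immediate: since $\nabla f$ is non-isotropic, $E_1=\nabla f/\|\nabla f\|$ is a unit vector, and the hypothesis $W(\cdot,\cdot,\cdot,\nabla f)=0$, together with the symmetries of $W$, forces the Weyl tensor to vanish whenever any of its arguments is $E_1$. Thus $W$ is completely determined by its restriction to the $3$-dimensional nondegenerate distribution $E_1^\perp=\spanned\{E_2,E_3,E_4\}$, where it is a totally trace-free algebraic curvature tensor (the trace over the $E_1$-slot drops out). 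As the space of such tensors is trivial in dimension $3$, we conclude $W=0$, i.e. $(M,g)$ is locally conformally flat.

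Next I would extract the transverse Ricci eigenvalues. Writing the qE equation \eqref{eq:general quasi-Einstein} as $\Hes_f=\lambda g-\rho+\mu\,df\otimes df$ and using that $\rho$ is diagonal in $\{E_1,\dots,E_4\}$ with $\nabla f$ an eigenvector, one sees that $\Hes_f$ is also diagonal in this frame. To show that the three eigenvalues transverse to $\nabla f$ coincide, I would feed the vanishing of both $W(\cdot,\cdot,\cdot,\nabla f)$ and $\operatorname{div}W$ into \eqref{eq:weylandcotton}: since $\mu\neq-\frac12$ the factor $(2\mu+1)$ cancels, and evaluating the resulting identity on $X=E_i$, $Y=E_1$, $Z=E_j$ with $i,j\geq 2$ forces the Ricci eigenvalues in the directions $E_2,E_3,E_4$ to be all equal. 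Consequently $\Hes_f$ restricted to $E_1^\perp$ is a multiple of the induced metric, $\Hes_f|_{E_1^\perp}=\phi\, g|_{E_1^\perp}$ for some function $\phi$.

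With this I would set up the warped product. The level sets of $f$ are the leaves of the integrable distribution $E_1^\perp$, and a short computation using that $\Hes_f$ has no mixed $E_1$–$E_i$ terms shows that the integral curves of $E_1$ are geodesics ($\nabla_{E_1}E_1=0$) and that the level sets are totally umbilic with umbilicity factor $\phi/\|\nabla f\|$. To upgrade this to a genuine warped product $I\times_\varphi N$ I must verify that the warping data is constant along the leaves. The identity $E_i(\|\nabla f\|^2)=2\Hes_f(E_i,\nabla f)=0$ for $i\geq2$ shows $\|\nabla f\|$ is constant on each leaf, and I would use the differentiated structure equations \eqref{eq:5} and \eqref{eq:6} (with $\Ric(\nabla f)=\varepsilon_1\rho(E_1,E_1)\nabla f$) to show that $\tau$, $\lambda$ and $\rho(E_1,E_1)$, hence $\phi$, are likewise constant on the leaves. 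This makes $E_1^\perp$ a spherical foliation with totally geodesic orthogonal complement, so the local decomposition theorem for such foliations yields $g=\varepsilon_1\,dt^2+\varphi(t)^2 g_N$. I expect this constancy-along-the-leaves step to be the main obstacle, since it is where the full force of the qE equation, and not merely its pointwise algebra, is needed.

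Finally, to identify the fiber I would insert the warped product curvature formulas. Comparing the vertical part of $\Ric$ for $I\times_\varphi N$ with the transverse eigenvalue relation of the second step shows that $\operatorname{Ric}_N$ is proportional to $g_N$; since $\dim N=3$ the Schur lemma forces the proportionality factor to be constant, so $N$ is Einstein and therefore of constant sectional curvature. Equivalently, one may invoke that a conformally flat warped product over a one-dimensional base must have a fiber of constant curvature. This completes the identification of $(M,g)$ with a warped product $I\times_\varphi N$ where $N$ has constant sectional curvature.
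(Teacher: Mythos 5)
Your overall architecture matches the paper's (equal transverse Ricci eigenvalues from \eqref{eq:weylandcotton}, totally umbilical level sets, a product decomposition, then a $3$-dimensional Einstein fiber of constant curvature), and your first step is a correct and genuinely different shortcut: since $E_1$ is non-null, $W(\cdot,\cdot,\cdot,E_1)=0$ together with the total trace-freeness of $W$ forces its restriction to $E_1^{\perp}$ to be a trace-free algebraic curvature tensor on a $3$-dimensional space, hence zero; so $W=0$ follows from the hypothesis $W(\cdot,\cdot,\cdot,\nabla f)=0$ alone, independently of signature. The paper only obtains local conformal flatness at the very end, as a consequence of the warped structure, so this observation is a real simplification of that part of the statement.

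However, the step you yourself flag as the main obstacle --- constancy of the umbilicity data along the leaves --- is not delivered by the tools you name. Projecting \eqref{eq:5} and \eqref{eq:6} onto a leaf direction $E_i$ ($i\geq 2$) annihilates every term containing $\nabla f$, $\Ric(\nabla f)$ or $\h_f(\nabla f)$ (all proportional to $E_1$ once the Ricci and Hessian operators are diagonal), and after eliminating $E_i(\Delta f)$ via \eqref{eq:trace-qE} (using $E_i(\|\nabla f\|^2)=0$) both equations collapse to the single identity $E_i(\tau)=6\,E_i(\lambda)$. One relation between two leaf-derivatives cannot show that $\tau$, $\lambda$ and $\rho(E_1,E_1)$ are separately constant on the leaves, so your passage from a twisted to a warped structure is incomplete as written. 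The paper closes exactly this gap by citation: the umbilical/geodesic pair of foliations gives a twisted product by \cite{ponge-reckziegel}, and the vanishing of the mixed Ricci components $\rho(E_1,E_i)$ --- which you already have from Lemma~\ref{lemma:W=0,nabla f eigenvector} --- reduces the twisted product to a warped product by \cite{Fernandez-Garcia-Kupeli-Unal}. You could invoke that same machinery, or alternatively use that $\operatorname{div}W=0$ makes the Schouten tensor a Codazzi tensor, whose eigenvalue of multiplicity three is then automatically constant along its eigendistribution; but some such additional input is required, and \eqref{eq:5}--\eqref{eq:6} alone do not supply it. The remaining steps (Einstein fiber in dimension three, hence constant sectional curvature) are fine.
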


\begin{proof}
	We use Equation~\eqref{eq:lemma W=0 mu neq} setting $X=Z=E_i$, with $i\neq 1$, and $Y=E_1$, to see that  $3\varepsilon_i \rho(E_i,E_i)=\tau-\varepsilon_1 \rho(E_1,E_1)$. Then, from Equation~\eqref{eq:general quasi-Einstein}, we get that $\Hes_f(E_i,E_j)=0$, for $i\neq j$, and, moreover,
	\[
	\Hes_f(E_i,E_i)=\varepsilon_i\lambda-\rho(E_i,E_i)=\varepsilon_i\left(\lambda-\varepsilon_i \rho(E_i,E_i)\right)=\left(\lambda-\frac{\tau-\varepsilon_1\rho(E_1,E_1)}{3}\right)\varepsilon_i,
	\]
	for all $i\neq 1$. Thus, it follows that the {\color{black}level hypersurfaces} of $M$ are totally umbilical. 
	Furthermore, the distribution generated by $\nabla f$  is totally geodesic.
	Hence $(M,g)$ decomposes locally as a twisted product of the form
	$I\times_\varphi N$, where $I\subset \mathbb{R}$ is an open interval, $N$ is an $(n-1)$-dimensional space and $\varphi$ is a function on $I\times N$ (see \cite{ponge-reckziegel}).
	Moreover, since the Ricci tensor is diagonal, the twisted product reduces to a warped product of the form $I\times_{\varphi}N$ (see \cite{Fernandez-Garcia-Kupeli-Unal}), for a certain function $\varphi$ on $I$. 
	Since $(M,g)$ has harmonic Weyl tensor, then $N$ is Einstein (see, for example, \cite{gebarowski}). Since $N$ is Einstein and $3$-dimensional, $N$ has constant sectional curvature and $(M,g)$ is locally conformally flat (see \cite{Some Remarks on LCF static}).
\end{proof}

\begin{remark}\rm
	Note that for $\dim M\geq 4$ and $\mu\neq -\frac{1}{\dim M-2}$ the arguments given in Lemma~\ref{lemma:W=0,nabla f eigenvector} and Theorem~\ref{Th:non-isotropic} work through and show that 
	\begin{quote}\it
		if $(M,g,f,\mu)$ is a non-isotropic qE structure with harmonic Weyl tensor and $W(\cdot,\cdot,\cdot,\nabla f)=0$, then $(M,g)$ decomposes as a warped product $I\times_\varphi N$, where $N$ is Einstein.
	\end{quote}
\end{remark}

\subsection{Isotropic quasi-Einstein manifolds}\label{subsection:isotropic}
In contrast with the non-isotropic case, if $\|\nabla f\|=0$ the level hypersurfaces of the potential function are degenerate hypersurfaces. This fact has immediate consequences on the geometry and the potential function of the qE manifold. 
\begin{lemma}\label{lemma:isotropic-first-results}
	Let $(M,g,f,\mu)$ be isotropic qE with $\mu\neq -\frac{1}{2}$. If $\operatorname{div} W=0$ and $W(\cdot,\nabla f,\cdot,\nabla f)=0$, then $\Ric(\nabla f)=\lambda \nabla f$, $\tau=4\lambda$, $\Delta f=0$ and $\nabla\lambda=-\lambda\nabla f$.
\end{lemma}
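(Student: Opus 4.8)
The plan is to exploit the degeneracy $g(\nabla f,\nabla f)\equiv 0$ twice — once to trivialize $\h_f(\nabla f)$ and once to read off the Ricci structure from the Weyl hypotheses — and then to collide the two pieces to produce $\tau=4\lambda$. I would begin by recording the purely isotropic fact that has no non-isotropic analogue: since $\nabla f$ is lightlike, $\|\nabla f\|^2\equiv 0$, so the identity $\nabla\|\nabla f\|^2=2\,\h_f(\nabla f)$ forces $\h_f(\nabla f)=\nabla_{\nabla f}\nabla f=0$. At the same time the trace equation \eqref{eq:trace-qE} collapses to $4\lambda=\Delta f+\tau$, so that $\tau=4\lambda$ and $\Delta f=0$ become equivalent goals.

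Next I would determine the Ricci operator along $\nabla f$ from the Weyl conditions. Substituting $\operatorname{div}W=0$ and $Y=\nabla f$ into \eqref{eq:weylandcotton}, imposing $W(X,\nabla f,Z,\nabla f)=0$, and using $g(\nabla f,\nabla f)=0$, I obtain (after dividing by $2\mu+1\neq0$) the isotropic counterpart of \eqref{eq:lemma W=0 mu neq}: a bilinear identity in $X,Z$ built only from $\rho$, $\tau$ and $df$. As in Lemma~\ref{lemma:W=0,nabla f eigenvector}, the cleanest way to mine it is to complete $\nabla f$ to a null frame $\{\nabla f,V,E_3,E_4\}$ with $g(\nabla f,V)=1$ and $g(E_i,E_j)=\delta_{ij}$ and evaluate on these vectors; equivalently, the skew-symmetric part of the identity forces $\rho(\nabla f,\cdot)$ to be proportional to $df$ (so $\nabla f$ is a Ricci eigenvector), while the symmetric part fixes the eigenvalue. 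This yields $\rho(\nabla f,\nabla f)=0$ and $\Ric(\nabla f)=\tfrac{\tau}{4}\nabla f$.

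Finally I would collide the two computations. Applying \eqref{eq:general quasi-Einstein} to the pair $(\nabla f,\cdot)$, together with $\Ric(\nabla f)=\tfrac{\tau}{4}\nabla f$ and $df(\nabla f)=0$, gives $\h_f(\nabla f)=(\lambda-\tfrac{\tau}{4})\nabla f$; comparing this with $\h_f(\nabla f)=0$ from the first step forces $\tau=4\lambda$ on the open set where $\nabla f\neq 0$. Then $\Ric(\nabla f)=\lambda\nabla f$, and $\Delta f=0$ follows from the reduced trace equation. Substituting $\tau=4\lambda$ (hence $\nabla\tau=4\nabla\lambda$) and $\Ric(\nabla f)=\lambda\nabla f$ into \eqref{eq:6} makes the $\mu$-dependent terms cancel and leaves $2\nabla\lambda=-2\lambda\nabla f$, i.e. $\nabla\lambda=-\lambda\nabla f$.

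The main obstacle is the middle step: because $\nabla f$ spans a degenerate line it cannot be normalized as in the non-isotropic case, so the Ricci eigenvalue must be teased out through a null frame (or through the symmetric/skew split of the bilinear identity) rather than an orthonormal one. It is worth stressing that the identification $\h_f(\nabla f)=0$ is what keeps the value $\mu=0$ harmless here, since it removes any reliance on the contracted identities \eqref{eq:5} and \eqref{eq:6} for establishing $\tau=4\lambda$, which is then used only afterward to obtain the gradient relation $\nabla\lambda=-\lambda\nabla f$.
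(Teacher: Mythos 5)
Your proof is correct and follows essentially the same route as the paper: isotropy kills $\operatorname{Hes}_f(\cdot,\nabla f)$, equation \eqref{eq:weylandcotton} with $Y=\nabla f$ yields $\tau=4\lambda$, the trace equation gives $\Delta f=0$, and a contracted identity gives $\nabla\lambda=-\lambda\nabla f$. The only (harmless) differences are that you extract the Ricci eigenvalue $\tfrac{\tau}{4}$ from the Weyl identity and then collide it with the qE equation, whereas the paper reads off $\Ric(\nabla f)=\lambda\nabla f$ from the qE equation first and then gets $\tau=4\lambda$ from the Weyl identity, and that you finish with \eqref{eq:6} instead of \eqref{eq:5}.
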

\proof	Since $\|\nabla f\|=0$, we have $\operatorname{Hes}_f(X,\nabla f)= \frac12Xg(\nabla f,\nabla f)=0$, so $\nabla_{\nabla f}\nabla f=0$. From Equation~\eqref{eq:general quasi-Einstein} we get $\Ric(\nabla f)=\lambda \nabla f$.

We evaluate Equation~\eqref{eq:weylandcotton} with $Y=\nabla f$ to see that
\[
0=(\tau-4\lambda) df(X)df(Z).
\] 
Therefore, we conclude that $\tau=4\lambda$. Now, from Equation~\eqref{eq:trace-qE} we get $\Delta f=0$. Hence, using Equation~\eqref{eq:5}, we obtain  $\nabla\lambda=-\lambda\nabla f$. 
\qed

%
%

The following result shows that the null vector field $\nabla f$ generates a parallel distribution, so the underlying manifold of the qE structure is a  Brinkmann space.

\begin{lemma}\label{lemma:LCFlorentzAQE then Walker}
Let $(M,g,f,\mu)$ be isotropic qE with $\mu\neq -\frac{1}{2}$. If $\operatorname{div} W=0$ and $W(\cdot,\cdot,\cdot,\nabla f)=0$, then $\mathcal{D}=\spanned\{\nabla f\}$ is a null parallel distribution.
\end{lemma}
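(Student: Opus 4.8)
The plan is to show that $\nabla f$ is \emph{recurrent}, i.e.\ $\nabla_X\nabla f=\omega(X)\,\nabla f$ for some $1$-form $\omega$; since the isotropy hypothesis gives $\|\nabla f\|=0$, this is precisely the statement that $\mathcal{D}=\spanned\{\nabla f\}$ is a null parallel distribution. Throughout I would restrict to the open set where $\nabla f\neq 0$, as only there does $\mathcal{D}$ define a genuine line field. The route is to first pin down the algebraic form of the Ricci tensor from the Weyl--Cotton identity, and then read off the recurrence directly from the quasi-Einstein equation.

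First I would feed the two hypotheses $\operatorname{div}W=0$ and $W(\cdot,\cdot,\cdot,\nabla f)=0$ into \eqref{eq:weylandcotton}: both the left-hand side and the divergence term vanish, leaving a purely tensorial identity. Inserting the consequences of Lemma~\ref{lemma:isotropic-first-results}, namely $\Ric(\nabla f)=\lambda\nabla f$ (so $\rho(\cdot,\nabla f)=\lambda\,df$) and $\tau=4\lambda$, collapses the $\tau$-term and the $\rho(\cdot,\nabla f)$-term together, and since $\mu\neq-\tfrac12$ one may divide by $2\mu+1$. I expect the identity to reduce, after collecting the coefficients, to
\[
df(X)\big(\rho(Y,Z)-\lambda g(Y,Z)\big)=df(Y)\big(\rho(X,Z)-\lambda g(X,Z)\big).
\]

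The key step is then a Cartan-type argument applied to the symmetric tensor $Q:=\rho-\lambda g$. For each fixed $Z$ the displayed identity says $df\wedge Q(\cdot,Z)=0$, so the $1$-form $Q(\cdot,Z)$ is a multiple of $df$, say $Q(X,Z)=a(Z)\,df(X)$; symmetry of $Q$ then gives $a(Z)\,df(X)=a(X)\,df(Z)$, which forces $a(\cdot)=c\,df$ for a single function $c$, whence $Q=c\,df\otimes df$, i.e.\ $\rho=\lambda g+c\,df\otimes df$. Substituting this into the quasi-Einstein equation \eqref{eq:general quasi-Einstein} and solving for the Hessian, the $\lambda g$ terms cancel and I expect to obtain $\Hes_f=(\mu-c)\,df\otimes df$. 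Rewriting this as $g(\nabla_X\nabla f,Y)=(\mu-c)\,df(X)\,g(\nabla f,Y)$ yields $\nabla_X\nabla f=(\mu-c)\,df(X)\,\nabla f$, which is exactly the recurrence, so $\mathcal{D}$ is parallel.

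The main obstacle is the Cartan-type deduction that $Q=c\,df\otimes df$: one must argue carefully that a symmetric tensor whose contraction against $df$ is symmetric in the remaining slot is in fact a multiple of $df\otimes df$, and that the scalar produced for each $Z$ is itself proportional to $df(Z)$ (this is where symmetry of $Q$ is essential, and where $\nabla f\neq 0$ is used). By comparison, the bookkeeping in collapsing \eqref{eq:weylandcotton} via $\rho(\cdot,\nabla f)=\lambda\,df$ and $\tau=4\lambda$ is routine, and once the Ricci structure $\rho=\lambda g+c\,df\otimes df$ is established the passage to $\Hes_f=(\mu-c)\,df\otimes df$ and hence to the parallelism of $\mathcal{D}$ is immediate.
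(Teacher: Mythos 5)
Your proposal is correct and follows essentially the same route as the paper: both feed the hypotheses into \eqref{eq:weylandcotton} together with $\Ric(\nabla f)=\lambda\nabla f$ and $\tau=4\lambda$ to force $\rho=\lambda g+c\,df\otimes df$, and then read off $\Hes_f\propto df\otimes df$ from \eqref{eq:general quasi-Einstein} to get recurrence of $\nabla f$. The only difference is stylistic: the paper computes the Ricci components in the adapted null frame $\{\nabla f,U,X_1,X_2\}$, whereas you obtain the same conclusion invariantly via the Cartan-lemma argument for the symmetric tensor $Q=\rho-\lambda g$.
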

\begin{proof}	
Since $\operatorname{div}W=0$  and $W(\cdot,\cdot,\cdot,\nabla f)=0$, we use Equation~\eqref{eq:weylandcotton} to see that, whenever $\mu\neq -\frac{1}{2}$, 
\begin{equation}\label{eq:lemma W=0 mu neq1}
\begin{array}{rcl}
0&=&\tau\{df(X)g(Y,Z)-df(Y) g(X,Z)\}\\
\noalign{\medskip}
& &+3\{ \rho(X,Z)df(Y)-\rho(Y,Z)df(X)\}\\
\noalign{\medskip}
&&+  g(X,Z)\rho(Y,\nabla f)
-g(Y,Z)\rho(X,\nabla f).
\end{array}
\end{equation}
Since $\|\nabla f\|=0$ and $\nabla f\neq 0$, we consider a local frame
$\mathcal{B}=\{\nabla f,U,X_1,X_2\}$ such that
the only non-zero metric products in $\mathcal{B}$ are $g(U,\nabla f)= g(X_i,X_i)=1$, for $i=1,2$. Moreover, from Lemma~\ref{lemma:isotropic-first-results}, $\Ric(\nabla f)=\lambda \nabla f$. Hence, we use Equation~\eqref{eq:lemma W=0 mu neq1} to obtain that $\rho(X_i,X_j)=0$ if $i\neq j$, $\rho(X_i,X_i)=\frac{\tau-\lambda}{3}=\lambda$ and $\rho(X_i,U)=0$.
Then, we use Equation (\ref{eq:general quasi-Einstein}) to see that the Hessian operator vanishes for every element of $\mathcal{B}$ but $U$, so  $\nabla_{X}\nabla f\propto \nabla f$ for all $X$ and, hence, $\operatorname{span}\{\nabla f\}$ is a null parallel distribution.
\end{proof}

Now, we concentrate on the analysis of the curvature components. Because $\mathcal{D}=\operatorname{span}\{\nabla f\}$ is parallel and lightlike, there are several terms of the curvature tensor that vanish identically, one has
\begin{equation}\label{eq:curvature0-terms_null-parallel-distribution}
R(\mathcal{D},\mathcal{D}^\perp,\cdot,\cdot)=0\,\text{ and }\, R(\mathcal{D}^\perp,\mathcal{D}^\perp,\mathcal{D},\cdot)=0.
\end{equation}
Note from Equation~\eqref{eq:7} that, if the Weyl tensor is harmonic, the curvature components $R(\cdot,\cdot,\cdot,\nabla f)$ can be written as follows:
\[
\begin{array}{l}
R(X,Y,Z, \nabla f)= d\lambda(X)g(Y,Z)- d\lambda(Y)g(X,Z) 
-\frac{1}{6}(X(\tau) g(Y,Z)-Y(\tau) g(X,Z))\\
\noalign{\medskip}\qquad\qquad\qquad\qquad\qquad\qquad +\mu\left\{df(Y)\Hes_f(X,Z)-df(X)\Hes_f(Y,Z)\right\}.
\end{array}
\]
Moreover, using that $\tau=4\lambda$ and $\nabla\lambda=-\lambda \nabla f$ this expression reduces to:
\begin{equation}\label{eq:curvature-nablaf}
\begin{array}{rcl}
R(X,Y,Z, \nabla f)&=& -\frac{\lambda}{3}\{df(X)g(Y,Z)- df(Y)g(X,Z)\} \\
\noalign{\medskip}&&+\mu\left\{df(Y)\Hes_f(X,Z)-df(X)\Hes_f(Y,Z)\right\}\\
\noalign{\medskip}
&=& -\left(\frac{1}{3}+\mu\right)\lambda\{df(X)g(Y,Z)- df(Y)g(X,Z)\} \\
\noalign{\medskip}&&+\mu\left\{df(X)\rho(Y,Z)-df(Y)\rho(X,Z)\right\}.
\end{array}
\end{equation}


Recall from \cite{leistner-nurowski} that a spacetime is said to be of {\it pure radiation} if $\rho(\cdot,\cdot)=\phi g(X,\cdot)g(X,\cdot)$ for a null vector field $X$. Moreover, if $X$ is parallel, then one has a {\it pure radiation metric with parallel rays}.

\begin{lemma}\label{lemma:lambda=0}
Let $(M,g,f,\mu)$ be isotropic qE with $\mu\neq -\frac{1}{2}$. If $\operatorname{div} W=0$ and $W(\cdot,\cdot,\cdot,\nabla f)=0$, then $\lambda=0$ and $(M,g)$ is a {\it pure radiation metric with parallel rays}.
\end{lemma}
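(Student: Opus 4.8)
The plan is to establish first that $\lambda=0$, since the pure radiation conclusion then follows almost immediately from the Ricci data already in hand. Recall from Lemma~\ref{lemma:isotropic-first-results} and the proof of Lemma~\ref{lemma:LCFlorentzAQE then Walker} that, in a local frame $\mathcal{B}=\{\nabla f,U,X_1,X_2\}$ whose only nonzero metric products are $g(U,\nabla f)=g(X_i,X_i)=1$, the Ricci tensor is determined on every pair except $(U,U)$: one has $\rho(\nabla f,\cdot)=\lambda\,df$ (from $\Ric(\nabla f)=\lambda\nabla f$), $\rho(X_i,X_j)=\lambda\,\delta_{ij}$ and $\rho(X_i,U)=0$. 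I would feed this information, together with the explicit curvature components $R(\cdot,\cdot,\cdot,\nabla f)$ from \eqref{eq:curvature-nablaf}, into the vanishing relations \eqref{eq:curvature0-terms_null-parallel-distribution} coming from the null parallel distribution $\mathcal{D}=\spanned\{\nabla f\}$.

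The key step exploits the pair symmetry $R(X,Y,Z,T)=R(Z,T,X,Y)$. For $Z\in\mathcal{D}^\perp=\spanned\{\nabla f,X_1,X_2\}$ one has $R(X,Y,Z,\nabla f)=R(Z,\nabla f,X,Y)$, and the right-hand side is a curvature term of type $R(\mathcal{D}^\perp,\mathcal{D},\cdot,\cdot)$, which vanishes by \eqref{eq:curvature0-terms_null-parallel-distribution} after using antisymmetry in the first pair. Hence $R(X,Y,X_k,\nabla f)=0$ for all $X,Y$. Substituting $Z=X_k$ in \eqref{eq:curvature-nablaf} and then choosing $X=U$, $Y=X_k$, every term carrying $U$ in a metric or Ricci slot drops out and one is left with
\[
0=-\left(\tfrac13+\mu\right)\lambda+\mu\lambda=-\tfrac13\lambda,
\]
so that $\lambda=0$. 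Note that the hypothesis $\mu\neq-\tfrac12$ has already been used upstream to derive \eqref{eq:curvature-nablaf}, while the final cancellation above is itself independent of $\mu$.

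With $\lambda=0$ the Ricci tensor collapses: $\rho(\nabla f,\cdot)=0$, $\rho(X_i,X_j)=0$ and $\rho(X_i,U)=0$, leaving $\rho(U,U)$ as the only possibly nonzero component. Since $df\otimes df$ has exactly this single nonzero entry in $\mathcal{B}$, I conclude $\rho=\phi\,df\otimes df$ with $\phi=\rho(U,U)$, i.e. $\rho(\cdot,\cdot)=\phi\,g(\nabla f,\cdot)g(\nabla f,\cdot)$, which is precisely the pure radiation condition for the null field $\nabla f$. Finally, because $\mathcal{D}=\spanned\{\nabla f\}$ is a null parallel distribution (Lemma~\ref{lemma:LCFlorentzAQE then Walker}), one may rescale $\nabla f$ to a parallel null vector field spanning the same line (the Brinkmann structure), with respect to which $\rho$ remains a multiple of $g(X,\cdot)g(X,\cdot)$; thus $(M,g)$ is a pure radiation metric with parallel rays. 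The main obstacle is the second paragraph: recognizing that the pair symmetry of $R$ turns the explicit but unwieldy formula \eqref{eq:curvature-nablaf} into a vanishing statement exactly on $\mathcal{D}^\perp$, which is what isolates the coefficient $\lambda$ and forces it to be zero.
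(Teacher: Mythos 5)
Your proof is correct and follows essentially the same route as the paper: you evaluate \eqref{eq:curvature-nablaf} on the triple $(U,X_k,X_k)$ to get $\mp\lambda/3$, kill that component via the pair symmetry of $R$ and the vanishing $R(\mathcal{D},\mathcal{D}^\perp,\cdot,\cdot)=0$ from \eqref{eq:curvature0-terms_null-parallel-distribution}, and then read off from the Ricci data of Lemma~\ref{lemma:LCFlorentzAQE then Walker} that only $\rho(U,U)$ survives. The paper's proof is the same computation with the roles of $X$ and $Y$ interchanged, so there is nothing to add.
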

\begin{proof}
We use the basis $\mathcal{B}$ from the proof of Lemma~\ref{lemma:LCFlorentzAQE then Walker}. On the one hand, by  Equation~(\ref{eq:curvature-nablaf}) we see that $R(X_i,U,X_i,\nabla f)=\frac{\lambda}{3}$. On the other hand, by Equation~\eqref{eq:curvature0-terms_null-parallel-distribution}, we know that $R(\nabla f,X_i,U,X_i)=0$. Therefore $\lambda=0$. Now the only non-zero component of the Ricci tensor is $\rho(U,U)$ so $g$ is a pure radiation metric with parallel rays (see \cite{leistner-nurowski}).
\end{proof}

\begin{remark}\rm
	The arguments of Lemmas~\ref{lemma:W=0,nabla f eigenvector}, \ref{lemma:LCFlorentzAQE then Walker} and \ref{lemma:lambda=0} do not depend on the fact that the dimension of the manifold is four. Thus these arguments show that, if $\dim M\geq 4$ with $\mu\neq -\frac{1}{\dim M-2}$, the  following more general result holds:
	\begin{quote}\it 
	If $(M,g,f,\mu)$ is an isotropic qE structure, with harmonic Weyl tensor and $W(\cdot,\cdot,\cdot,\nabla f)=0$, then $(M,g)$ is a pure radiation metric with parallel rays.
	\end{quote}
\end{remark}



The following result specifies the structure of the manifold for an isotropic qE structure in dimension four.

\begin{theorem}\label{th:isotropic}
	Let $(M,g,f,\mu)$ be an isotropic qE Lorentzian structure of dimension $4$ with $\mu\neq -\frac{1}{2}$. If $(M,g)$ has harmonic Weyl tensor and $W(\cdot, \cdot,\cdot,\nabla f)=0$, then it is locally isometric to a $pp$-wave. 			
\end{theorem}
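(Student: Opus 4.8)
The plan is to leverage the accumulated structural results and upgrade the \emph{pure radiation metric with parallel rays} conclusion of Lemma~\ref{lemma:lambda=0} to a genuine $pp$-wave, which in dimension four means constructing local coordinates in which the metric takes the Brinkmann/$pp$-wave normal form $g = 2\,du\,dv + H(u,x_1,x_2)\,du^2 + dx_1^2 + dx_2^2$ with $\nabla f$ a constant multiple of $\partial_v$. First I would record what is already in hand: by Lemma~\ref{lemma:LCFlorentzAQE then Walker} the distribution $\mathcal{D}=\operatorname{span}\{\nabla f\}$ is null and parallel, so $(M,g)$ is a Brinkmann space and admits coordinates $(u,v,x_1,x_2)$ with $\nabla f=\partial_v$ (up to normalization) and $g$ in the standard Walker form. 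By Lemma~\ref{lemma:lambda=0} we have $\lambda=0$, hence $\tau=4\lambda=0$ and $\Delta f=0$, and the only possibly nonzero Ricci component is $\rho(U,U)=\rho(\partial_u,\partial_u)$, i.e. $\rho=\phi\,df\otimes df$ for a null parallel $\nabla f$.

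The key step is to show that $\nabla f$ is in fact a \emph{parallel} vector field, not merely that it spans a parallel line, and that this forces the cross terms in $H$ to vanish. For a $pp$-wave (as opposed to a general Brinkmann/plane-fronted wave) one needs the defining function $H$ to be independent of $v$ and, crucially, one needs $\nabla f=\partial_v$ to be parallel so that the wave profile genuinely lives on the transverse screen $\{x_1,x_2\}$. To get this I would exploit the qE equation directly: since $\|\nabla f\|=0$, $\operatorname{Hes}_f(\cdot,\nabla f)=0$ automatically, and from Lemma~\ref{lemma:LCFlorentzAQE then Walker} the Hessian operator of $f$ vanishes on all of $\mathcal{B}$ except possibly $U$. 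Combined with $\Delta f=0$ and $\lambda=0$, the qE equation \eqref{eq:general quasi-Einstein} reduces to $\operatorname{Hes}_f+\rho=\mu\,df\otimes df$, and since both $\operatorname{Hes}_f$ and $\rho$ are supported only on the $(U,U)$ slot while $df\otimes df$ is as well, this is a single scalar relation determining $f$'s $v$-dependence; I expect it to force $f=f(u)$ (so $\nabla f$ is along $\partial_v$ with $u$-dependent coefficient) and then, after possibly reparametrizing $u$, that $\nabla f$ is parallel. One then checks that the curvature conditions \eqref{eq:curvature0-terms_null-parallel-distribution} together with $\rho=\phi\,df\otimes df$ are exactly the curvature signature of a $pp$-wave, allowing identification of the metric with the normal form.

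The main obstacle I anticipate is distinguishing a $pp$-wave from the broader class of Brinkmann spaces / plane-fronted waves with a merely recurrent (rather than parallel) null direction: establishing that $\nabla f$ can be rescaled to an honestly parallel null field, and hence that $H$ has no $v$-dependence and no $du\,dx_i$ cross terms, is where the real work lies. I would handle this by writing out the connection coefficients in the Walker coordinates, imposing the vanishing Hessian components and $\rho=\phi\,df\otimes df$, and reading off that the recurrence one-form for $\nabla f$ must be exact along the flow, which lets me absorb it by the change of variable $v\mapsto v+\psi(u)$. A secondary subtlety is that the conclusion is only \emph{local} and only an isometry onto a $pp$-wave, not a plane wave; I would emphasize, as the introduction promises, that $\phi$ (equivalently $H$) need not be quadratic in the transverse variables, so no further rigidity of the profile is claimed, and the theorem stops precisely at the $pp$-wave normal form.
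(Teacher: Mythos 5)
There is a genuine gap: you have located the difficulty in the wrong place. Upgrading $\nabla f$ from recurrent to parallel (and normalizing coordinates so that $H$ has no $v$-dependence or cross terms) is not the obstruction the paper has to overcome; the characterization it invokes from \cite{leistner} already accepts a recurrent null field spanning a parallel line, \emph{provided} the additional curvature condition $R(\mathcal{D}^\perp,\mathcal{D}^\perp,\cdot,\cdot)=0$ holds. That condition is the actual content of the theorem, and your proposal never establishes it. You assert that the curvature identities \eqref{eq:curvature0-terms_null-parallel-distribution} together with $\rho=\phi\, df\otimes df$ ``are exactly the curvature signature of a $pp$-wave,'' but this is false: \eqref{eq:curvature0-terms_null-parallel-distribution} only kills $R(\mathcal{D},\mathcal{D}^\perp,\cdot,\cdot)$ and $R(\mathcal{D}^\perp,\mathcal{D}^\perp,\mathcal{D},\cdot)$, leaving the components $R(X_1,X_2,X_i,U)$ and $R(X_1,X_2,X_1,X_2)$ undetermined. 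Pure radiation metrics with parallel rays form a strictly larger class than $pp$-waves (this is precisely why the paper separates Lemma~\ref{lemma:lambda=0} from Theorem~\ref{th:isotropic}), so the conclusion does not follow from what you have assembled.

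The missing argument is a Ricci-contraction computation that crucially uses the hypothesis $\operatorname{div}W=0$ through identity \eqref{eq:curvature-nablaf}: from $0=\rho(X_1,U)=R(U,X_1,\nabla f,U)+R(X_2,X_1,X_2,U)$ one needs $R(U,X_1,\nabla f,U)=0$, which comes from \eqref{eq:curvature-nablaf} (with $\lambda=0$ and the isotropic Ricci tensor), to conclude $R(X_2,X_1,X_2,U)=0$; similarly $0=\rho(X_1,X_1)=2R(U,X_1,\nabla f,X_1)+R(X_1,X_2,X_1,X_2)$ together with \eqref{eq:curvature0-terms_null-parallel-distribution} gives $R(X_1,X_2,X_1,X_2)=0$. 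Your plan of writing out connection coefficients in Walker coordinates and imposing the Hessian and Ricci constraints would not recover these vanishings, because they are not consequences of the Brinkmann structure and the form of $\rho$ alone; they encode the harmonic Weyl hypothesis. (As a secondary point, your intermediate claim that the qE equation forces $f=f(u)$ in suitable coordinates is the kind of coordinate computation the paper only carries out later, in Lemma~\ref{lemma:fnotdependv}, \emph{after} the $pp$-wave form is already available; using it here would be circular in your setup.)
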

\begin{proof}
We have already seen that, under the hypotheses of Theorem~\ref{th:isotropic}, $\nabla f$ is a null recurrent vector field (Lemma~\ref{lemma:LCFlorentzAQE then Walker}) and that the Ricci tensor is isotropic (Lemma~\ref{lemma:lambda=0}). Hence, $(M,g)$ is a $pp$-wave if, moreover, $R(\mathcal{D}^\perp,\mathcal{D}^\perp,\cdot,\cdot)=0$ (see \cite{leistner}). 
We work with the basis $\mathcal{B}=\{\nabla f,U, X_1,X_2\}$ given above. Using \eqref{eq:curvature0-terms_null-parallel-distribution}, to see that $R(\mathcal{D}^\perp,\mathcal{D}^\perp,\cdot,\cdot)=0$ we only need to verify that $R(X_1, X_2) X_i=0$, for $i=1,2$. More specifically, it is enough to check  that $R(X_1,X_2,X_i,U)=0$ and that $R(X_1,X_2,X_1,X_2)=0$. We compute
\[
0=\rho(X_1,U)=R(U,X_1,\nabla f,U)+ R(X_2,X_1,X_2,U).
\]
By \eqref{eq:curvature-nablaf} we have that $R(U,X_1,\nabla f,U)=0$, so
$R(X_2,X_1,X_2,U)=0$ as well. Analogously, we get that $R(X_1,X_2,X_1,U)=0$.
Now, we compute
\[
0=\rho(X_1,X_1)=2R(U,X_1,\nabla f,X_1)+R(X_1,X_2,X_1,X_2),
\]
but $R(U,X_1,\nabla f,X_1)=0$ by \eqref{eq:curvature0-terms_null-parallel-distribution}, so $R(X_1,X_2,X_1,X_2)=0$. 
\end{proof}

\section{Isotropic Quasi-Einstein $pp$-waves}
In this section we consider $4$-dimensional $pp$-waves $(\mathbb{R}^4,g_{ppw})$ with local coordinates $(u,v,x_1,x_2)$ such that 
\begin{equation}\label{eq:pp-wave}
g_{ppw}=2dudv+H(u,x_1,x_2) du^2+dx_1^2+dx_2^2.
\end{equation}
Note that the degenerate parallel line field is $\mathcal{D}=\operatorname{span}\{\partial_{v}\}$.
To continue the analysis of qE $pp$-waves, we distinguish the case in which the manifold is locally conformally flat.
\begin{theorem}
Let $(\mathbb{R}^4,g_{ppw})$ be a locally conformally flat $pp$-wave. Let $\mu\neq -\frac12$. Then $(\mathbb{R}^4,g_{ppw},f,\mu)$ is isotropic qE if and only if there exist functions $a$, $b_1$, $b_2$ and $c$ so that  \[H(u, x_1,x_{2}) = a(u)(x_1^2+x_2^2)+
b_1(u) x_1+	b_2(u) x_2+c(u),\] and $f=f(u)$ satisfies  $f''(u)-\mu f'(u)^2-2a(u) = 0$.
\end{theorem}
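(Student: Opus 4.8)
The plan is to reduce the quasi-Einstein equation $\Hes_f+\rho-\mu\,df\otimes df=\lambda g$ to a system of scalar equations in the coordinates $(u,v,x_1,x_2)$ and to read off from it the conditions on $H$ and $f$. I would first record the curvature of $g_{ppw}$: a direct computation of the Christoffel symbols shows that $\tau=0$ and that the Ricci tensor has a single nonzero component, a multiple of the transverse Laplacian $\partial_{x_1}^2H+\partial_{x_2}^2H$. The local conformal flatness hypothesis is equivalent to the vanishing of the trace-free part of the transverse Hessian of $H$, that is $\partial_{x_1}^2H=\partial_{x_2}^2H$ and $\partial_{x_1}\partial_{x_2}H=0$. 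Integrating these two relations yields precisely $H=a(u)(x_1^2+x_2^2)+b_1(u)x_1+b_2(u)x_2+c(u)$, with $2a(u)=\partial_{x_1}^2H$, which already produces the asserted form of $H$; for such $H$ one gets $\rho=-2a(u)\,du\otimes du$ in the sign convention fixed above.

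For the forward implication I would exploit that a locally conformally flat manifold has $W=0$, so in particular $\operatorname{div}W=0$ and $W(\cdot,\cdot,\cdot,\nabla f)=0$ hold trivially and Lemmas~\ref{lemma:LCFlorentzAQE then Walker} and~\ref{lemma:lambda=0} apply. They give $\lambda=0$ and that $\spanned\{\nabla f\}$ is the parallel degenerate line field of the $pp$-wave, namely $\mathcal D=\spanned\{\partial_v\}$. Expressing $\nabla f$ in coordinates, the condition $\nabla f\propto\partial_v$ forces $\partial_v f=\partial_{x_1}f=\partial_{x_2}f=0$, so $f=f(u)$. Since $g$ does not depend on $v$ and the components $g_{\cdot v}$ are constant, all Christoffel symbols $\Gamma^u_{ij}$ vanish, whence $\Hes_f=f''(u)\,du\otimes du$ and $df\otimes df=f'(u)^2\,du\otimes du$. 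Substituting into the qE equation, the $du\,dv$- and $dx_i^2$-components reproduce $\lambda=0$, while the $du^2$-component gives exactly $f''(u)-\mu f'(u)^2-2a(u)=0$.

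For the converse I would run the same computation backwards: if $H$ has the stated form and $f=f(u)$ solves the ODE, then $\|\nabla f\|^2=0$, so $\nabla f=f'(u)\partial_v$ is lightlike and the structure is isotropic, and the tensor $\Hes_f+\rho-\mu\,df\otimes df$ has its only possibly nonzero component equal to $(f''-\mu f'^2-2a)\,du\otimes du$, which vanishes by the ODE. Hence the qE equation holds with $\lambda=0$, as required.

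The computation is essentially routine once the $pp$-wave curvature is in hand, and the $H$-part of the statement is just the conformal-flatness condition made explicit. The one step that genuinely needs the earlier theory is the forward reduction to $f=f(u)$: invoking the fact that $\nabla f$ spans the parallel null distribution is far cleaner than analyzing the full qE system for an unrestricted potential, which is where I would expect the main difficulty to lie if one tried to argue by brute force.
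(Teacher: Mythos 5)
Your argument is correct in substance but follows a genuinely different route from the paper's. The paper's proof is essentially a citation: it extracts $\lambda=0$ from Lemma~\ref{lemma:lambda=0} and then invokes \cite{LCFLorentzianQE}, where locally conformally flat quasi-Einstein $pp$-waves with $\lambda=0$ were already classified. You instead make the statement self-contained: local conformal flatness alone (read off from the Weyl components of Lemma~\ref{lemma:W-divW}) forces $H=a(u)(x_1^2+x_2^2)+b_1(u)x_1+b_2(u)x_2+c(u)$ with $2a=\partial_{x_1}^2H$, and once $f=f(u)$ and $\lambda=0$ are in hand the quasi-Einstein system collapses to the single $du^2$-component, which is the stated ODE. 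The converse is indeed a routine verification. This buys a proof independent of the external reference, at the cost of having to justify the reduction to $f=f(u)$ yourself rather than importing it.

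That reduction is the one place where your write-up has a gap. Lemma~\ref{lemma:LCFlorentzAQE then Walker} gives that $\spanned\{\nabla f\}$ is \emph{a} null parallel line field; you then silently identify it with $\mathcal{D}=\spanned\{\partial_v\}$. A $pp$-wave need not have a unique parallel null direction. When $a\not\equiv 0$ the identification is valid, but it requires a short curvature argument: since $R(\partial_u,\partial_{x_i},\partial_u,\partial_{x_j})$ is a nonzero multiple of $a(u)\delta_{ij}$, any null vector field spanning a parallel line must have vanishing $\partial_u$- and $\partial_{x_i}$-components, hence be proportional to $\partial_v$. You should include this. When $a\equiv 0$ the metric is flat ($\rho=0$ and $W=0$ in dimension $4$), there are several parallel null directions, and the conclusion $f=f(u)$ genuinely fails: for $\mu=0$ and $H=0$ the function $f=v$ has $\operatorname{Hes}_f=0=\rho$, $\lambda=0$ and $\nabla f=\partial_u$ null, so $(\mathbb{R}^4,g_{ppw},v,0)$ is isotropic qE with a potential not of the stated form. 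This degenerate case is arguably a defect of the theorem as literally stated rather than of your proof alone, but your approach is where it surfaces; either exclude the flat case or note that the conclusion holds after an isometry (e.g.\ interchanging the null coordinates) adapting $\partial_v$ to $\nabla f$.
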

\begin{proof}
During the analysis of the previous section we observed (see Lemma~\ref{lemma:lambda=0}) that under the hypotheses $\operatorname{div} W=0$ and $W(\cdot,\cdot,\cdot,\nabla f)=0$ we have that, necessarily, $\lambda=0$. Locally conformally flat $pp$-waves with $\lambda=0$ were studied in \cite{LCFLorentzianQE}, showing that they are plane waves as above with $f$ being a solution to the given equation.
\end{proof}
Henceforth we consider the parameter $\mu$ to be arbitrary.
The following lemma is a direct computation, so we omit the details of the proof.
\begin{lemma}\label{lemma:W-divW}
Let $(\mathbb{R}^4,g_{ppw})$ be a $pp$-wave, then the only possibly non-zero terms of $W$ and $\operatorname{div} W$ are, modulo symmetries, 
\begin{equation}\label{eq:divW-pp-wave}
\begin{array}{rcl}
W(\partial_u,\partial_{x_1},\partial_u,\partial_{x_1})&=&-W(\partial_u,\partial_{x_2},\partial_u,\partial_{x_2})=\frac14\left(\partial_{x_2}^2 H-\partial_{x_1}^2 H\right),\\
\noalign{\medskip} W(\partial_u,\partial_{x_1},\partial_u,\partial_{x_2})&=&-\frac12\partial_{x_1}\partial_{x_2} H,\\
\noalign{\medskip}
\operatorname{div} W(\partial_u,\partial_{x_1},\partial_{u})&=&-\frac14\left(\partial_{x_1}^3 H+\partial_{x_1}\partial_{x_2}^2 H\right), \\
\noalign{\medskip}
\operatorname{div}W(\partial_u,\partial_{x_2},\partial_{u})&=&-\frac14\left(\partial_{x_1}^2\partial_{x_2} H+\partial_{x_2}^3 H\right).
\end{array}
\end{equation}
\end{lemma}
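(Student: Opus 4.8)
The proof is a direct coordinate computation, and the plan is to arrange it so that the structural vanishing is transparent and only the three transverse second derivatives of $H$ survive. First I would record the inverse metric, whose only entries differing from those of $g_{ppw}$ are $g^{uv}=g^{vu}=1$ and $g^{vv}=-H$, with $g^{uu}=0$. A direct application of the Koszul formula, using that the sole non-constant coefficient $g_{uu}=H$ depends only on $(u,x_1,x_2)$, shows that the only non-vanishing Christoffel symbols are $\Gamma^v_{uu}=\tfrac12 H_u$, $\Gamma^{x_i}_{uu}=-\tfrac12 H_{x_i}$ and $\Gamma^v_{ux_i}=\tfrac12 H_{x_i}$ for $i=1,2$. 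In particular $\nabla\partial_v=0$, so $\partial_v$ is parallel and every curvature component having a $\partial_v$ entry vanishes; this recovers \eqref{eq:curvature0-terms_null-parallel-distribution} and already restricts the admissible Riemann components to those built from $\{\partial_u,\partial_{x_1},\partial_{x_2}\}$.

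Next I would compute the Riemann tensor. Since the quadratic terms $\Gamma\Gamma$ all carry a clashing index and drop out, only the first-derivative terms remain, and the sole surviving block is the transverse one $R(\partial_u,\partial_{x_i},\partial_u,\partial_{x_j})=-\tfrac12 H_{x_ix_j}$; no purely transverse component $R(\partial_{x_1},\partial_{x_2},\partial_{x_1},\partial_{x_2})$ survives because there are no Christoffel symbols among the $x$-directions. Contracting gives the single non-zero Ricci component $\rho(\partial_u,\partial_u)=-\tfrac12(H_{x_1x_1}+H_{x_2x_2})$ and, since $g^{uu}=0$, scalar curvature $\tau=0$. Feeding $\tau=0$ together with this Ricci tensor into \eqref{eq:weyl} yields the stated Weyl components: the diagonal entries pick up the trace term $-\tfrac12\rho(\partial_u,\partial_u)=\tfrac14(H_{x_1x_1}+H_{x_2x_2})$, which combines with $R(\partial_u,\partial_{x_i},\partial_u,\partial_{x_i})$ to give the trace-free values $\pm\tfrac14(H_{x_2x_2}-H_{x_1x_1})$, while the off-diagonal entry receives no Ricci contribution and equals $-\tfrac12 H_{x_1x_2}$.

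For the divergence it is cleanest to use the expression for $\operatorname{div} W$ recorded in the proof of Lemma~\ref{lemma:formulae}: because $\tau=0$ it reduces to the antisymmetrised covariant derivative of the Ricci tensor. Evaluating $\nabla\rho$ on the coordinate frame, the Christoffel corrections all land on $\rho(\partial_v,\cdot)=0$ and hence drop out, leaving only the plain derivative $\partial_{x_i}\rho(\partial_u,\partial_u)$. This produces $\operatorname{div} W(\partial_u,\partial_{x_i},\partial_u)=\tfrac12\,\partial_{x_i}\rho(\partial_u,\partial_u)=-\tfrac14\,\partial_{x_i}(H_{x_1x_1}+H_{x_2x_2})$, which is exactly the stated result, and all remaining components vanish because any other slot forces either a $\partial_v$ entry or a contraction against $g^{uu}=0$.

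The computation is entirely mechanical once the Christoffel symbols are in hand, so the only genuine difficulty is bookkeeping: verifying that every index combination outside the listed families vanishes. The step I would be most careful with is organising this vanishing at the outset, by recording that $\partial_v$ is parallel (so no non-zero component may contain it) together with \eqref{eq:curvature0-terms_null-parallel-distribution}; this pins down the single admissible transverse Riemann block, after which the algebraic Weyl formula and the contractions above force everything else to zero with no lengthy case analysis.
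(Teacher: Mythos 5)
Your computation is correct and follows exactly the route the paper intends: the paper states this lemma as "a direct computation" and omits all details, and your Christoffel symbols, curvature, Ricci tensor, Weyl components, and divergence (via the $\tau=0$ reduction of the $\operatorname{div}W$ formula from Lemma~\ref{lemma:formulae}) all check out against the stated values in \eqref{eq:divW-pp-wave}. The organisational device of first noting that $\partial_v$ is parallel and $g^{uu}=0$ to kill all but the transverse block is a sensible way to handle the bookkeeping the paper leaves implicit.
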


In what follows we are going to work repeatedly with the terms of the qE equation so, for a convenient notation, we define the operator $\mathcal{Q}(f):={\Hes}_f+\rho-\mu  df\otimes df-\lambda g$.
Before we characterize non-locally conformally flat qE $pp$-waves, we provide the following useful lemma.
\begin{lemma}\label{lemma:fnotdependv}
Let $(M,g)$ be a non-locally conformally flat $pp$-wave with degenerate parallel line field $\mathcal{D}$. If $(M,g)$ is qE, then the potential function satisfies $df(\mathcal{D})=0$.	
\end{lemma}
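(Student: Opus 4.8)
The plan is to write the quasi-Einstein condition $\mathcal{Q}(f)=0$ componentwise in the coordinates $(u,v,x_1,x_2)$ and play it off against the rigidity of the $pp$-wave curvature. First I would record the metric data of $g_{ppw}$: the only nonvanishing Christoffel symbols are $\Gamma^v_{uu}=\tfrac12\partial_u H$, $\Gamma^{x_i}_{uu}=-\tfrac12\partial_{x_i}H$ and $\Gamma^v_{ux_i}=\tfrac12\partial_{x_i}H$; the scalar curvature vanishes; and the Ricci tensor is a multiple of $du\otimes du$. Combined with Lemma~\ref{lemma:W-divW}, this shows that $(M,g)$ fails to be locally conformally flat exactly where the traceless part of the $(x_1,x_2)$-Hessian of $H$ is nonzero, i.e. where $\partial_{x_1}^2H\neq\partial_{x_2}^2H$ or $\partial_{x_1}\partial_{x_2}H\neq 0$.

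Writing $p:=df(\partial_v)=\partial_v f$, $\phi_i:=\partial_{x_i}f$ and $\psi:=\partial_u f$, I would extract from $\mathcal{Q}(f)=0$ the three families of scalar equations that do not involve the $uu$-component: the $(x_ix_j)$ equations give $\partial_{x_i}\partial_{x_j}f=\mu\phi_i\phi_j+\lambda\,\delta_{ij}$; the $(vx_i)$ equations give $\partial_{x_i}p=\mu\,p\,\phi_i$; and the $(ux_i)$ equations give $\tfrac12\,p\,\partial_{x_i}H=\partial_{x_i}\psi-\mu\,\psi\,\phi_i$. The strategy is to argue by contradiction on the open set $\{p\neq 0\}$: there the last relation solves for $\partial_{x_i}H=\tfrac{2}{p}(\partial_{x_i}\psi-\mu\psi\phi_i)$, so I can compute the full Hessian $\partial_{x_i}\partial_{x_j}H$ and show it is pure trace, forcing $W=0$ against the hypothesis.

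The main step, and the only place needing a genuine calculation, is to show that all non-trace terms in $\partial_{x_i}\partial_{x_j}H$ cancel. Differentiating $\partial_{x_i}H=\tfrac{2}{p}(\partial_{x_i}\psi-\mu\psi\phi_i)$ in $x_j$ produces terms in $\phi_i\phi_j$, $\partial_{x_i}\psi\,\phi_j$ and second derivatives $\partial_{x_i}\partial_{x_j}\psi$, with the factor $\tfrac1p$ handled via $\partial_{x_j}p=\mu p\phi_j$. The cancellation is driven by one auxiliary identity: differentiating the $x$-Hessian relation $\partial_{x_i}\partial_{x_j}f=\mu\phi_i\phi_j+\lambda\delta_{ij}$ in $u$ and using $\partial_u\phi_i=\partial_{x_i}\psi$ yields $\partial_{x_i}\partial_{x_j}\psi=\mu(\partial_{x_i}\psi\,\phi_j+\partial_{x_j}\psi\,\phi_i)+\partial_u\lambda\,\delta_{ij}$. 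Substituting this together with $\partial_{x_j}\phi_i=\mu\phi_i\phi_j+\lambda\delta_{ij}$, all the $\phi_i\phi_j$ and $\partial_{x_i}\psi\,\phi_j$ contributions cancel and one is left with $\partial_{x_i}\partial_{x_j}H=\tfrac{2}{p}(\partial_u\lambda-\mu\lambda\psi)\,\delta_{ij}$, a pure trace. By Lemma~\ref{lemma:W-divW} this means $W\equiv 0$ on $\{p\neq 0\}$.

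Finally I would conclude: since $\{p\neq 0\}\subseteq\{W=0\}$ and the $pp$-wave is assumed non-locally conformally flat (so $W\neq 0$), the set $\{p\neq 0\}$ must be empty. Hence $p=df(\partial_v)=df(\mathcal{D})=0$, as claimed. I expect the only delicate point to be the bookkeeping in the cancellation of the third paragraph; everything else is a direct reading of the componentwise quasi-Einstein equation, so the non-LCF hypothesis enters precisely to exclude the branch $p\neq 0$.
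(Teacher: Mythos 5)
Your proposal is correct, and it takes a genuinely different route from the paper's. The paper starts from the $(v,v)$-component $\partial_v^2f=\mu(\partial_vf)^2$ and \emph{integrates} it, which forces a case split: for $\mu=0$ it writes $f=\psi v+\eta$, for $\mu\neq 0$ it writes $f=\nu-\frac{1}{\mu}\log(\mu v+\phi)$, and in each case it grinds through the remaining components of $\mathcal{Q}(f)$ to pin down $\psi,\eta,\nu,\phi$ and finally $H$, arriving at $\partial_{x_1}^2H=\partial_{x_2}^2H$ and $\partial_{x_1}\partial_{x_2}H=0$, hence $W=0$ by Lemma~\ref{lemma:W-divW}. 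You never integrate anything and need no case split: the three first-order relations $\partial_{x_i}p=\mu p\phi_i$, $\partial_{x_i}\partial_{x_j}f=\mu\phi_i\phi_j+\lambda\delta_{ij}$ and $\tfrac12 p\,\partial_{x_i}H=\partial_{x_i}\psi-\mu\psi\phi_i$ are exactly the $(vx_i)$, $(x_ix_j)$ and $(ux_i)$ components (the Ricci tensor only contributes to the $uu$-slot, so it drops out of all of these), and the cancellation you describe does go through --- I checked that one is left with $\partial_{x_i}\partial_{x_j}H=\tfrac{2}{p}\left(\partial_u\lambda-\mu\lambda\psi\right)\delta_{ij}$ on $\{p\neq0\}$, so the traceless $x$-Hessian of $H$ vanishes there and Lemma~\ref{lemma:W-divW} gives $W=0$. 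Your argument is shorter, uniform in $\mu$, and isolates the algebraic mechanism (the qE system forces the $x$-Hessian of $H$ to be pure trace wherever $\partial_vf\neq0$); the paper's computation yields explicit normal forms for $f$ and $H$ along the way, which is extra information but not needed for the lemma. One shared point of looseness, not a defect relative to the paper: ``non-locally conformally flat'' literally means $W\not\equiv0$, so $\{p\neq0\}\subseteq\{W=0\}$ strictly only forces $p=0$ on the closure of $\{W\neq0\}$; both your argument and the paper's implicitly read the hypothesis locally, on a domain where $W$ does not vanish, consistent with the local statement of Theorem~\ref{th:isotropic-pp-wave}.
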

\begin{proof}
We use the local form given by Equation \eqref{eq:pp-wave} and begin by computing:
\begin{equation}\label{eq:f1}
\mathcal{Q}(f)_{22}= \partial_v^2 f-\mu \partial_v f.
\end{equation}
Now, the analysis is different depending on whether $\mu$ equals $0$ or not. We distinguish the two cases:

\underline{$\mu=0$:} from \eqref{eq:f1}, $f$ has the form $f(u,v,x_1,x_2)=\psi(u,x_1,x_2) v+\eta(u,x_1,x_2)$. Now, we compute
\[
\mathcal{Q}(f)_{23}=\partial_{x_1} \psi,\qquad \mathcal{Q}(f)_{24}=\partial_{x_2} \psi,
\]
from where $\psi=\psi(u)$. From $\mathcal{Q}(f)_{12}= -\lambda+\psi'$ we obtain that  $\lambda=\psi'(u)$, so computing 
\[
\mathcal{Q}(f)_{33}= -\psi'+\partial_{x_1}^2 \eta,\quad \mathcal{Q}(f)_{44}= -\psi'+\partial_{x_2}^2 \eta,\quad
\mathcal{Q}(f)_{34}=\partial_{x_1}\partial_{x_2} \eta,
\]
we get that $\eta=\eta^0+ \eta^{1}(u) x_1+ \eta^{2}(u) x_2+\frac{\psi'}{2}(x_1^2+x_2^2)$. From the qE equation again,
\[
\mathcal{Q}(f)_{13}=\eta^{1}{}^\prime+x_1 \psi''-\frac{\psi \partial_{x_1}H}2,\qquad \mathcal{Q}(f)_{14}=\eta^{2}{}^\prime+x_2 \psi''-\frac{\psi \partial_{x_2}H}2.
\]
Differentiating these expressions with respect to $x_1$ and $x_2$ we see that $\psi \partial_{x_1}^2 H=\psi \partial_{x_2}^2 H$ and that $\psi\partial_{x_1}\partial_{x_2}H=0$. If $\partial_{x_1}^2 H= \partial_{x_2}^2 H$ and $\partial_{x_1}\partial_{x_2}H=0$, from \eqref{eq:divW-pp-wave}, we obtain $W=0$, contrary to our assumption. Therefore, we conclude that $\psi=0$, so $f$ does not depend on $v$.

\underline{$\mu\neq 0$:} we argue by contradiction and assume that $f$ depends on $v$. Hence, from \eqref{eq:f1} we get that $f$ has the following form: 
\[
f(u,v,x_1,x_2)=\nu(u,x_1,x_2)-\frac{1}{\mu} \log(\mu v+\phi(u,x_1,x_2)).
\]
A new computation shows that
\[
\mathcal{Q}(f)_{23}=\frac{\mu\, \partial_{x_1} \nu}{\mu v+\phi},\quad \mathcal{Q}(f)_{24}=\frac{\mu\, \partial_{x_2} \nu}{ \mu v+\phi}.
\]
Hence $\nu=\nu(u)$ and $f(u,v,x_1,x_2)=\nu(u)-\frac{1}{\mu} \log(\mu v+\phi(u,x_1,x_2))$, so
\[
\mathcal{Q}(f)_{34}=-\frac{\partial_{x_1}\partial_{x_2} \phi}{\mu(\mu v+\phi)},
\]
and $\phi(u,x_1,x_2)=\phi^1(u,x_1)+\phi^2(u,x_2)$. We continue by computing
\[
\mathcal{Q}(f)_{13}=\frac{ 2 \mu \nu' \partial_{x_1} \phi^1- 2\partial_u\partial_{x_1} \phi^1+\mu \partial_{x_1} H}{ 2\mu(\mu v+ \phi)},
\]
and, since the numerator must vanish identically, we differentiate with respect to $x_2$ to see that $\mu\partial_{x_1}\partial_{x_2}H=0$. So $H(u,x_1,x_2)=H^1(u,x_1)+H^2(u,x_2)$. We compute
\[
\mathcal{Q}(f)_{12}= -\lambda+\frac{\mu \nu'}{\mu v+\phi},\,\mathcal{Q}(f)_{33}=-\lambda-\frac{\partial_{x_1}^2\phi^1}{\mu(\mu v+\phi)},\,\mathcal{Q}(f)_{44}=-\lambda- \frac{\partial_{x_2}^2\phi^2}{\mu(\mu v+\phi)}.
\]
Hence, comparing these expressions we conclude that $\partial_{x_1}^2\phi^1=\partial_{x_2}^2\phi^2= -\mu^2 \nu'$. So $\partial_{x_1}^3 \phi^1=\partial_{x_2}^3\phi^2=0$ and $\phi(u,x_1,x_2)=-\frac{\mu^2 v'}{2} (x_1^2+x_2^2)+\phi^{10}(u)x^1+\phi^{20}(u)x^2+\phi^0(u)$. Also, we get that $\lambda(u,v,x_1,x_2)=\frac{\mu \nu'}{\mu v+\phi}$.

Now, we check that 
\[
\begin{array}{rcl}
\mathcal{Q}(f)_{13}&=&\frac{-2 \phi^{10}{}^\prime+\mu(2\phi^{10} \nu^\prime+2x_1\mu(-\mu (\nu^\prime)^2+\nu^{\prime \prime})+\partial_{x_1} H^1)}{2\mu(\mu v+\phi)},\\
\noalign{\medskip}
\mathcal{Q}(f)_{14}&=&\frac{-2 \phi^{20}{}^\prime+\mu(2 \phi^{20} \nu'+2 x_2 \mu(-\mu (\nu')^2+\nu^{\prime \prime})+\partial_{x_2}H^2)}{2\mu(\mu v+\phi)}.
\end{array}
\]
Since both numerators must vanish identically, we differentiate the first one with respect to $x_1$  and the second one with respect to $x_2$ to see that
\[
\partial_{x_1}^2 H^1=-2\mu(-\mu (\nu^\prime)^2+\nu^{\prime \prime}),\qquad \partial_{x_2}^2 H^2=-2 \mu(-\mu (\nu')^2+\nu^{\prime \prime}).
\]
Hence $\partial_{x_1}^2 H= \partial_{x_2}^2 H$ and, since we already know that $\partial_{x_1}\partial_{x_2}H=0$, we have from \eqref{eq:divW-pp-wave} that $W=0$. This contradicts the assumption that $(\mathbb{R}^4,g_{ppw})$ is non-locally conformally flat, so we conclude that $f$ does not depend on $v$.
\end{proof}

The following result shows that a necessary  condition for a $pp$-wave which is not locally conformally flat to be isotropic qE is precisely that the Weyl tensor is harmonic. And, moreover, every $pp$-wave with harmonic Weyl tensor is, at least locally, an isotropic qE manifold for any $\mu$.
\begin{theorem}\label{th:isotropic-pp-wave}
	Let $(U\subset\mathbb{R}^4,g_{ppw})$ be a non-locally conformally flat $pp$-wave. The following statements are equivalent: 
	\begin{enumerate}
		\item[(i)] $(U,g_{ppw})$ is isotropic qE,
		\item[(ii)] $W$ is harmonic,		
		\item[(iii)] $\Delta_x H=\varphi(u)$, where $\Delta_x H= \partial_{x_1}^2 H+
		\partial_{x_2}^2H$.
	\end{enumerate}
	If any of these conditions holds, then $W(\cdot,\cdot,\cdot,\nabla f)=0$ and $f=f(u)$ is given by
	\begin{equation}\label{eq:qE-final}
	f''(u)-\mu f'(u)^2-\frac{1}{2}\Delta_x H=0.
	\end{equation}
\end{theorem}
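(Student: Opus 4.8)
The plan is to treat the three conditions cyclically, exploiting that all components of $W$ and $\operatorname{div}W$ are already recorded in Lemma~\ref{lemma:W-divW}. The equivalence (ii)$\Leftrightarrow$(iii) is then immediate: by \eqref{eq:divW-pp-wave} one has $\operatorname{div}W(\partial_u,\partial_{x_i},\partial_u)=-\tfrac14\partial_{x_i}(\partial_{x_1}^2H+\partial_{x_2}^2H)=-\tfrac14\partial_{x_i}(\Delta_x H)$, so the vanishing of $\operatorname{div}W$ is equivalent to $\partial_{x_1}(\Delta_x H)=\partial_{x_2}(\Delta_x H)=0$, i.e. to $\Delta_x H=\varphi(u)$. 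The substantive work is therefore to tie condition (i) to this, and the engine is a direct evaluation of the operator $\mathcal{Q}(f)=\Hes_f+\rho-\mu\,df\otimes df-\lambda g$ in the coordinates of \eqref{eq:pp-wave}.

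For (i)$\Rightarrow$(iii) I would first pin down the potential. By Lemma~\ref{lemma:fnotdependv} the qE hypothesis forces $df(\mathcal{D})=0$, so $f=f(u,x_1,x_2)$; a computation of $\|\nabla f\|^2$ in these coordinates then gives $\|\nabla f\|^2=(\partial_{x_1}f)^2+(\partial_{x_2}f)^2$, and isotropy $\|\nabla f\|^2=0$ forces $f=f(u)$. With $f=f(u)$ the trace equation \eqref{eq:trace-qE} collapses, since $\tau=0$, $\Delta f=0$ and $\|\nabla f\|^2=0$ for a $pp$-wave, to $\lambda=0$. I would next evaluate $\mathcal{Q}(f)$ slot by slot: the Hessian of $f(u)$, the Ricci tensor of a $pp$-wave (whose only nonzero entry is $\rho(\partial_u,\partial_u)=-\tfrac12\Delta_x H$) and $df\otimes df$ are all supported on the $\partial_u\partial_u$ slot, so $\mathcal{Q}(f)=0$ reduces to the single scalar equation $f''(u)-\mu f'(u)^2-\tfrac12\Delta_x H=0$, which is exactly \eqref{eq:qE-final}. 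Since $f''(u)-\mu f'(u)^2$ depends only on $u$, this equation forces $\Delta_x H$ to depend only on $u$, giving (iii).

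For the converse (iii)$\Rightarrow$(i), hence also (ii)$\Rightarrow$(i), I would run this computation backwards: assuming $\Delta_x H=\varphi(u)$, the equation $f''-\mu(f')^2=\tfrac12\varphi(u)$ is a Riccati equation in $p=f'$ and thus admits local solutions $f=f(u)$; for any such $f$ the same slot-by-slot analysis shows $\mathcal{Q}(f)=0$, while $\nabla f=f'\partial_v$ is lightlike, so $(U,g_{ppw},f,\mu)$ is isotropic qE. Finally, the assertion $W(\cdot,\cdot,\cdot,\nabla f)=0$ is read off directly from \eqref{eq:divW-pp-wave}: every nonzero component of $W$ lies in the span of $\partial_u,\partial_{x_1},\partial_{x_2}$ and carries no $\partial_v$ slot, whereas $\nabla f=f'\partial_v\in\mathcal{D}$. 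I expect the only delicate point to be the bookkeeping in the component computation: verifying that for $f=f(u)$ every off-diagonal entry of $\mathcal{Q}(f)$ vanishes identically, so that the full tensor equation genuinely reduces to the single ODE, and keeping the sign and normalization of $\rho(\partial_u,\partial_u)$ consistent with the curvature convention fixed in Section~2.
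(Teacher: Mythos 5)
Your proposal is correct and follows essentially the same route as the paper: it relies on Lemma~\ref{lemma:fnotdependv} and the isotropy of $\nabla f$ to reduce to $f=f(u)$, reads off $\lambda=0$ and the single ODE from the components of $\mathcal{Q}(f)$, and identifies (ii) with (iii) via the $\operatorname{div}W$ components of Lemma~\ref{lemma:W-divW}. The only (harmless) differences are organizational — you prove (ii)$\Leftrightarrow$(iii) directly and get $\lambda=0$ from the trace equation, whereas the paper cycles (i)$\Rightarrow$(iii)$\Rightarrow$(ii)$\Rightarrow$(i) and reads $\lambda=0$ off $\mathcal{Q}(f)_{33}$.
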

\begin{proof}
We consider a non-locally conformally flat $pp$-wave $(\mathbb{R}^4,g_{ppw})$ as given in \eqref{eq:pp-wave} and work in local coordinates. We assume $(\mathbb{R}^4,g_{ppw},f,\mu)$ is qE. A direct computation shows that for arbitrary $f$ we have
\begin{equation}\label{eq:gradiente}
\|\nabla f\|^2=2\partial_u f \partial_v f-H \left(\partial_v f\right)^2+\left(\partial_{x_1} f\right)^2+\left(\partial_{x_2} f\right)^2.
\end{equation}
By Lemma~\ref{lemma:fnotdependv} $f$ does not depend on $v$, so
$\|\nabla f\|^2=\left(\partial_{x_1} f\right)^2+\left(\partial_{x_2} f\right)^2$. Hence $\partial_{x_1} f=\partial_{x_2}f=0$ and $f=f(u)$. A direct computation of the qE equation shows that
\[
\mathcal{Q}(f)_{33}=\mathcal{Q}(f)_{44}= -\lambda,
\]
so $\lambda=0$. Now, the only non-vanishing term in the qE equation is
\begin{equation}\label{eq:qE-isotropic-lasteq}
\mathcal{Q}(f)_{11}=f''-\mu (f')^2-\frac12\left(\partial_{x_1}^2 H+\partial_{x_2}^2H\right).
\end{equation}
So $\Delta_x H= \partial_{x_1}^2 H+
\partial_{x_2}^2H=2(f''-\mu (f')^2)$ and (i) implies (iii). Now, assuming 
$\Delta_x H=\varphi(u)$, we differentiate with respect to $x_1$ and to $x_2$ to see that $\partial_{x_1}^3 H+\partial_{x_1}\partial_{x_2}^2 H=0$ and $\partial_{x_1}^2\partial_{x_2} H+\partial_{x_2}^3 H=0$. Hence, from Equation~\eqref{eq:divW-pp-wave} we conclude that $\operatorname{div}W=0$ and (ii) follows, so (iii) implies (ii).

Finally, we assume (ii) holds. Then, from \eqref{eq:divW-pp-wave} we have $\partial_{x_1}^3 H+\partial_{x_1}\partial_{x_2}^2 H=0$ and $\partial_{x_1}^2\partial_{x_2} H+\partial_{x_2}^3 H=0$. So $\partial_{x_1}^2 H+\partial_{x_2}^2 H$ is a function of $u$ and Equation~\eqref{eq:qE-isotropic-lasteq} admits a solution $f=f(u)$ in an open subset $U$. So $(U,g_{ppw},f, \mu)$ is isotropic qE and (i) follows.  

If (i) holds, then $f=f(u)$ satisfies \eqref{eq:qE-final} and, moreover, $\nabla f=  f'\partial_v$. Hence, from the expression of $W$ in \eqref{eq:divW-pp-wave} we have that $W(\cdot,\cdot,\cdot,\nabla f)=0$ and the theorem follows.
\end{proof}

Note that in Theorem~\ref{th:isotropic-pp-wave} there is no restriction on $\mu$. Thus, in particular, it works for $\mu=-\frac12$. Hence a $4$-dimensional $pp$-wave with harmonic Weyl tensor is conformally Einstein.

\begin{corollary}
A $4$-dimensional $pp$-wave is isotropically conformally Einstein if and only if $\operatorname{div} W=0$.
\end{corollary}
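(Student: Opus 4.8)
The plan is to reduce everything to Theorem~\ref{th:isotropic-pp-wave} by fixing $\mu=-\frac12$. First I would recall from the introduction that $\tilde g=e^{-f}g$ is Einstein if and only if $(M,g,f,-\frac12)$ is qE; hence ``isotropically conformally Einstein'' is exactly the assertion that the $pp$-wave admits an isotropic qE structure with $\mu=-\frac12$, the word \emph{isotropic} encoding that $\nabla f$ is lightlike. Since Theorem~\ref{th:isotropic-pp-wave} carries no restriction on $\mu$, for a non-locally conformally flat $pp$-wave the equivalence (i)$\Leftrightarrow$(ii) already reads: isotropically conformally Einstein $\iff\operatorname{div}W=0$. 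So the only genuine content left to address is the locally conformally flat case.

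For the forward implication I would split on local conformal flatness. If $(U,g_{ppw})$ is not locally conformally flat, being isotropically conformally Einstein means being isotropic qE with $\mu=-\frac12$, and Theorem~\ref{th:isotropic-pp-wave}, (i)$\Rightarrow$(ii), yields $\operatorname{div}W=0$. If $(U,g_{ppw})$ is locally conformally flat, then $W=0$, so $\operatorname{div}W=0$ holds trivially and nothing further is needed.

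For the converse I would argue uniformly, without distinguishing the two cases. Assuming $\operatorname{div}W=0$, Lemma~\ref{lemma:W-divW} gives $\partial_{x_1}^3H+\partial_{x_1}\partial_{x_2}^2H=0$ and $\partial_{x_1}^2\partial_{x_2}H+\partial_{x_2}^3H=0$, that is $\partial_{x_1}(\Delta_xH)=\partial_{x_2}(\Delta_xH)=0$, so $\Delta_xH=\varphi(u)$. I would then solve the Riccati-type ODE $f''(u)+\frac12 f'(u)^2=\frac12\varphi(u)$, which admits a local solution, and set $f=f(u)$. Because $\nabla f=f'\partial_v$ and $\partial_v$ is lightlike, $\nabla f$ is isotropic, and the same computation of $\mathcal{Q}(f)$ carried out in the proof of Theorem~\ref{th:isotropic-pp-wave} (which only uses $f=f(u)$) shows that, once $\lambda=0$, the single remaining component $\mathcal{Q}(f)_{11}$ is given by \eqref{eq:qE-isotropic-lasteq} and vanishes precisely by the chosen ODE. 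Hence $(U,g_{ppw},f,-\frac12)$ is isotropic qE, i.e. isotropically conformally Einstein.

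The main obstacle is the locally conformally flat subcase of the converse: there Theorem~\ref{th:isotropic-pp-wave} does not apply, as it is stated for non-locally conformally flat $pp$-waves, so one cannot simply quote it and must instead produce the conformal factor by hand. The observation that makes this painless is that the $f=f(u)$ ansatz decouples the qE system into the single scalar ODE above, whose solvability is guaranteed by standard ODE theory, while the lightlike character of $\nabla f$ is automatic from the form of $g_{ppw}$.
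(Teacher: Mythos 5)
Your proposal is correct and takes essentially the same route as the paper, which obtains the corollary simply by invoking Theorem~\ref{th:isotropic-pp-wave} with $\mu=-\tfrac12$ (the remark preceding the corollary notes the theorem places no restriction on $\mu$, and conformally Einstein is exactly qE with $\mu=-\tfrac12$). Your explicit treatment of the locally conformally flat case is a point the paper leaves implicit; it is indeed covered by your ODE argument (equivalently by the first theorem of Section~3, since $W=0$ forces $H$ to be quadratic in $(x_1,x_2)$ with $\Delta_xH$ a function of $u$ alone), so the extra care closes a small gap rather than changing the approach.
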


\end{document}